\documentclass[reqno]{amsart}

\usepackage{amsmath,amssymb,amsthm,amsfonts}
\usepackage{color}
\usepackage{hyperref}
\usepackage{enumerate}
\usepackage{tikz} 
\usetikzlibrary{arrows} 

\makeatletter
\@addtoreset{equation}{section}
\makeatother

\theoremstyle{plain}
\newtheorem{prop}{Proposition}[section]
\newtheorem{theorem}[prop]{Theorem}

\newtheorem{lemma}[prop]{Lemma}
\newtheorem{definition}[prop]{Definition}
\newtheorem{cor}[prop]{Corollary}

\theoremstyle{remark}
\newtheorem{remark}[prop]{Remark}

\newcommand{\N}{\mathbb{N}}
\newcommand{\R}{\mathbb{R}}

\renewcommand{\P}{\mathbb{P}}
\newcommand{\E}{\mathbb{E}}

\newcommand{\dd}{\mathrm{d}} 
\newcommand{\eps}{\varepsilon}



\def\tire{\thinspace--\thinspace}

\renewcommand{\Cap}{\mathrm{cap}\,}  

\newcommand{\be}{\begin{equation}}
\newcommand{\ee}{\end{equation}}                  


\title{Berman-Konsowa principle for reversible Markov jump processes}

\author{Frank den Hollander}
\address{Mathematical Institute, Leiden University, P.O.\ Box 9512,
2300 RA Leiden, The Netherlands}
\email{denholla@math.leidenuniv.nl}

\author{Sabine Jansen}
\address{Fakult\"at f\"ur Mathematik, Ruhr-Universit\"at Bochum,
 44780 Bochum, Germany}
\email{sabine.jansen@ruhr-uni-bochum.de}

\date{October 13, 2014}

\begin{document}

\begin{abstract}
In this paper we prove a version of the Berman-Konsowa principle for reversible 
Markov jump processes on Polish spaces. The Berman-Konsowa principle 
provides a variational formula for the capacity of a pair of disjoint measurable 
sets. There are two versions, one involving a class of probability measures for 
random finite paths from one set to the other, the other involving a class of finite 
unit flows from one set to the other. The Berman-Konsowa principle complements 
the Dirichlet principle and the Thomson principle, and turns out to be especially 
useful for obtaining sharp estimates on crossover times in metastable interacting 
particle systems.
  
\medskip\noindent 
\emph{MSC 2010}: 
60J05, 
60J25, 
60J45, 
60J75. 

\medskip\noindent
\emph{Keywords}: Reversible Markov jump processes, electric networks, potential 
theory, capacity, Dirichlet principle, Thomson principle, Berman-Konsowa principle.\\
\emph{Acknowledgment}: The authors are grateful to Anton Bovier for fruitful input.
The research in this paper was supported by ERC Advanced Grant 267356 VARIS
and was carried out while SJ held a postdoctoral position in Leiden from September 
2012 until May 2013.

\end{abstract}

\maketitle


\section{Introduction} 
\label{s:intro}

Section~\ref{ss:motivation} provides the motivation, Section~\ref{ss:setting} formulates the 
setting, Section~\ref{ss:theorems} states the main theorems, while Section~\ref{ss:discussion}
discusses these theorems and places them in their proper context.

\subsection{Motivation}
\label{ss:motivation}

The motivation for the present paper comes from the theory of \emph{metastability} for interacting 
particle systems, i.e., systems consisting of a large number of interacting random components 
evolving according to a Markovian random dynamics on a space of configurations. As time evolves, 
the system moves through different subregions of its configuration space, corresponding to different 
``thermodynamic phases''. Typically, in a metastable setting, on short time scales the system reaches 
a quasi-equilibrium inside a single subregion, while on long time scales it makes rapid transitions 
between different subregions, with crossover times that are exponentially distributed on the scale 
of their mean. The task of mathematics is to analyze such systems in detail, and to explain the 
experimentally observed universality in their metastable behavior. This is a conceptual program 
of great challenge.

There are two main approaches to metastability: (1) the \emph{pathwise approach}, initiated 
by Freidlin and Wentzell~\cite{freidlin-wentzell}, in which a detailed description is given of the 
trajectories of the system, and the focus is on identifying the most likely trajectories and to estimate 
their probabilities; (2) the \emph{potential-theoretic approach}, initiated by Bovier, Eckhoff, 
Gayrard and Klein~\cite{bovier-eckhoff-gayrard-klein}, in which metastability is viewed as a 
sequence of visits of the trajectory to different metastable sets, and the focus is on a precise 
analysis of the respective hitting probabilities and hitting times of these sets with the help of 
potential theory. Phrased differently, the problem of understanding the metastable behavior 
of Markov processes is translated to the study of equilibrium potentials and capacities of 
\emph{electric networks} (Doyle and Snell~\cite{doyle-snell}).

More precisely, the configurations of the system are viewed as the vertices of the network 
and the transitions between pairs of configurations as the edges of the network. The transition 
probabilities are represented by the conductances associated with the edges. In this language, 
the hitting probability of a target set of configurations as a function of the starting configuration 
can be expressed in terms of the equilibrium potential on the network when the potential is set
equal to one on the vertices of the target set and equal to zero on the starting vertex. The average 
hitting time of the target set can then be expressed in terms of the equilibrium potential and the 
capacity associated with the target set and the starting vertex. For metastable sets it turns out 
that \emph{the average hitting time is essentially the inverse of the capacity}.

A key observation in the potential-theoretic approach is the fact that capacities can be estimated 
by exploiting powerful \emph{variational principles}. In fact, dual variational principles are available 
that express the capacity both as an infimum over potentials (\emph{Dirichlet principle}) and as 
a supremum over flows (\emph{Thomson principle}). This opens up the possibility to derive sharp 
upper bounds and lower bounds on the capacity via a judicious choice of test functions. In fact, 
with the proper physical insight, test functions can be found for which the upper bound and the 
lower bound are asymptotically equivalent (in an appropriate limit corresponding to a metastable 
regime). Consequently, with the help of the potential-theoretic approach asymptotic estimates of 
the average crossover time can be derived that are much sharper than those typically obtainable 
with the help of the pathwise approach.

Both the Dirichlet principle and the Thomson principle have been key tools in electric network 
theory for many years (Doyle and Snell~\cite{doyle-snell}). More recently, Berman and 
Konsowa~\cite{berman-konsowa} proved two variational formulas for \emph{finite} electric networks, 
one in terms of probability measures on paths from one set to another (or their associated flows),  
the other in terms of random cuts (or their associated co-boundaries). The Berman\tire Konsowa 
principle has been instrumental in obtaining sharp bounds for interacting particle systems with 
complex interactions (Bovier, den Hollander and Nardi~\cite{bovier-denhollander-nardi}, 
Bianchi, Bovier and Ioffe~\cite{bianchi-bovier-ioffe}, Bovier, den Hollander and
 Spitoni~\cite{bovier-denhollander-spitoni}). 
 
The goal of the present paper it to \emph{generalize} the Berman\tire Konsowa principle for reversible 
Markov chains on \emph{finite spaces} to reversible Markov jump processes on \emph{Polish 
spaces}. The principal technical difficulty lies in the richer recurrence behavior of the Markov 
jump process. Consequently, the generalization comes in two steps. First we treat the case of 
finite total conductance, corresponding to continuous-time Markov processes whose underlying 
discrete-time jump chain is positive recurrent. After that we treat the case of infinite total 
conductance, associated with null recurrent or transient jump chains. The latter requires a 
preliminary re-examination of the relevant potential theory, the solution of the Dirichlet problem 
being no longer unique (Appendix~\ref{app1}), followed by a careful truncation procedure that 
approximates infinite conductances by finite conductances. 

Our principal motivation is an application to metastability for continuum interacting particle systems 
(den Hollander and Jansen~\cite{denhollander-jansen}, den Hollander, Jansen, Koteck\'y and 
Pulvirenti~\cite{denhollander-jansen-kotecky-pulvirenti}). For an overview on the potential-theoretic 
approach to metastability we refer the reader to the monograph by Bovier and den 
Hollander~\cite{bovier-denhollander}.

\subsection{Setting}
\label{ss:setting}

Let $(\Omega,\mathcal{F})$ be a Polish space. Let $X=(X_t)_{t \geq 0}$ be a continuous-time 
irreducible \emph{recurrent} Markov jump process on $\Omega$ with transition rates 
$k(x,\dd y)$ and reversible invariant measure $\mu(\dd x)$ (Stroock~\cite{stroock}), i.e., 
$k(x,\dd y)$ is the rate to jump from $x$ to a neighborhood $\dd y$ of $y$, and 
\be 
\label{reversible}
\mu( \dd x) k(x,\dd y) = \mu(\dd y) k (y,\dd x).
\ee
Define
\be
\label{eq:KPk}
K(\dd x, \dd y) = \mu( \dd x) k(x,\dd y),
\ee 
which is a non-negative symmetric measure on $\Omega \times \Omega$. We assume that 
$K(\Omega \times \Omega) < \infty$ and that $k(x,\Omega)>0$ for $\mu$-a.a.\ $x$. 
For measurable $C,D \subset \Omega$, we think of $K(C \times D) = K(D \times C )$ as the 
\emph{conductance} of $X$ between $C$ and $D$. As long as $X_0$ is drawn from a probability 
measure that is absolutely continuous w.r.t.\ $\mu$, $X$ makes finitely jumps in every finite 
time-interval (see Appendix~\ref{app1}).

For $f\colon\,\Omega \to \R$ measurable and bounded, define 
\be
\label{eq:generator}
(Lf)(x) = \int_\Omega \bigl[ f(y) - f(x) \bigr] k(x,\dd y)
\ee
and
\be
\label{eq:dirichlet-form}
\mathcal{E}(f) =  \int_{\Omega} \bigl[ - (Lf)(x) \bigr] f(x) \mu(\dd x)
= \frac{1}{2}  \int_{\Omega\times \Omega} \bigl[ f(y) - f(x) \bigr]^2 K(\dd x,\dd y),
\ee
which are the infinitesimal generator (with domain $\mathcal{D}(L)$) and the 
Dirichlet form (with domain $\mathcal{D}(\mathcal E)$) associated with $X$ 
(see Appendix~\ref{app1} for technical details). 

Later we will exploit the fact that $X$ can be constructed as a random time-change 
of a discrete-time Markov chain $Z=(Z_n)_{n\in\N}$ with transition kernel
\be \label{eq:discrete} 
\frac{k(x,\dd y)}{k(x,\Omega)}.
\ee
The finiteness of $K$ ($K(\Omega \times \Omega) < \infty$) is equivalent to $Z$ 
being \emph{positive recurrent}. It is possible that $Z$ is positive recurrent while 
$X$ is null-recurrent ($\mu(\Omega) = \infty$). See, in particular, the comments 
after Proposition~\ref{prop:non-explosion2}. At the end of Section~\ref{ss:theorems} 
the case of \emph{recurrent} $X$ with \emph{null-recurrent} $Z$ and the case of 
\emph{transient} $X$ will be included, i.e., the extension to $K(\Omega\times \Omega) 
= \infty$ will be made.

Throughout the sequel, $A,B \subset \Omega$ are \emph{disjoint} and measurable.
The harmonic function and the capacity of the pair $(A,B)$ associated with $X$ are 
defined as
\be 
\label{eq:habdef}
h_{AB}(x) = \left\{\begin{array}{ll}
\P_x(\tau_A < \tau_B), &x \in (A \cup B)^c,\\
1, &x \in A,\\
0, &x \in B,
\end{array}
\right.
\ee
and
\be 
\label{eq:cap-def1}
\Cap(A,B) = \int_A (-Lh_{AB})(x)\,\mu(\dd x),
\ee
where $\tau_C = \inf\{t > 0\colon\,X_t \in C\}$ is the first hitting time of $C$, $\P_x$ is the law 
of $X$ given $X_0=x$, and $(-Lh_{AB})(x)$ is the \emph{equilibrium charge} at $x\in A$. The 
\emph{Dirichlet principle} says that the capacity satisfies the variational formula (Fukushima, Oshima and 
Takeda~\cite{fukushima-oshima-masayoshi})
\be
\label{capdef} 
\Cap(A,B)= \inf_{h \in \mathcal{V}_{AB}} \mathcal{E}(h)
\ee
with
\be
\label{VABdef}
\mathcal{V}_{AB} = \bigl\{ h \colon\, \Omega \to \R \mid h|_A =1,\ h|_{B} = 0,\,
0 \leq h \leq 1\,\,\mu\text{-a.e.},\,\mathcal{E}(h)<\infty\bigr\},
\ee
and has $h=h_{AB}$ as its unique minimizer. For completeness the proof is given in 
Appendix~\ref{app1}.


\subsection{Theorems}
\label{ss:theorems}

Let $\Gamma_{AB}$ be the set of finite paths from $A$ to $B$, i.e., 
\be 
\Gamma_{AB} = \bigcup_{n\in \N} \bigl\{\gamma = (\gamma_0,\ldots,\gamma_n) 
\mid \gamma_0 \in A,\ \gamma_n \in B,\ \gamma_i \notin B \text{ for } i =1,\ldots,n-1\bigr\}. 
\ee
Write $(x,y) \in \gamma$ when there is a $j\in\N$ such that $(x,y)=(\gamma_{j-1},\gamma_{j})$.
Let $\P$ be a probability measure on $\Gamma_{AB}$. Then there is a unique measure $\Phi 
= \Phi_\P$ on $\Omega \times \Omega$ such that for every bounded and measurable 
$f\colon\,\Omega \times \Omega \to [0,\infty)$, 
\be  
\label{eq:edge-proba}
 \E \left[ \sum_{(x,y) \in\gamma} f(x,y) \right] 
 = \int_{\Omega\times \Omega} f(x,y)\Phi(\dd x,\dd y),
\ee
where $\gamma$ is the random element of $\Gamma_{AB}$ whose law is $\P$. Picking $f(x,y) = 
\mathbf{1}_{C\times D}(x,y)$, $C,D \subset \Omega$, we see that $\Phi(C \times D)$ is the expected 
number of edges $(x,y)$ in the random path $\gamma$ with $x \in C$ and $y \in D$. In particular, 
$\Phi$ is finite if and only if the expected length of $\gamma$ is finite.

Our first main result is the following theorem.
 
\begin{theorem}[Berman-Konsowa principle: path version]
\label{thm:BK1}
Let $\mathcal{P}_{AB}^K$ be the set of probability measures $\P$ on $\Gamma_{AB}$ such that:\\ 
{\rm (a)} $\Phi_\P$ is absolutely continuous with respect to $K$.\\
{\rm (b)} There is a measurable $\chi \subset \Omega \times \Omega$ such that $\Phi_\P (\chi^{\rm c}) 
= 0$ and $(y,x) \notin \chi$ for all $(x,y) \in \chi$.\\
Then 
\be
\label{eq:Cap1}
\Cap(A,B)  = \sup_{\P \in \mathcal{P}_{AB}^K} \E\left[\left( \sum_{(x,y) \in \gamma} 
\frac{\dd \Phi_{\P}}{\dd K}(x,y) \right)^{-1} \right]. 
\ee
This identity remains true when the supremum is restricted to the smaller set of probability measures 
on finite self-avoiding paths from $A$ to $B$. 
\end{theorem}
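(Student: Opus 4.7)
For the upper bound $\E[(\sum \phi)^{-1}] \le \Cap(A,B)$ I would fix $\P \in \mathcal{P}_{AB}^K$ with density $\phi = \dd\Phi_\P/\dd K$ and test against $h = h_{AB}$ (more generally, any $h \in \mathcal{V}_{AB}$). For every $\gamma = (\gamma_0, \ldots, \gamma_n) \in \Gamma_{AB}$, the telescoping identity $\sum_{i=1}^n [h(\gamma_{i-1}) - h(\gamma_i)] = 1$ combined with Cauchy-Schwarz using weights $\phi(\gamma_{i-1}, \gamma_i)$ gives
\[
1 \le \Bigl(\sum_{(x,y)\in\gamma}\phi(x,y)\Bigr)\Bigl(\sum_{(x,y)\in\gamma}\frac{[h(x)-h(y)]^2}{\phi(x,y)}\Bigr).
\]
Rearranging, taking $\P$-expectation, and applying \eqref{eq:edge-proba} together with $\dd\Phi_\P = \phi\,\dd K$ reduces the bound to
\[
\E\Bigl[\Bigl(\sum_{(x,y)\in\gamma}\phi(x,y)\Bigr)^{-1}\Bigr] \le \int_{\{\phi>0\}} [h(x)-h(y)]^2\,K(\dd x,\dd y),
\]
and the Dirichlet principle \eqref{capdef} controls the right-hand side by $\mathcal{E}(h) = \Cap(A,B)$. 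A subtlety is that, since $K$ is symmetric while $\Phi_\P$ need not be, a naive version of this step can pick up a factor of $2$; this is avoided by first restricting to $\P$ with $\phi(x,y)\phi(y,x)=0$ $K$-a.e.\ (and showing the restricted supremum equals the unrestricted one) or, equivalently, by working with the antisymmetric flow $J_\P(\dd x,\dd y) = (\phi(x,y)-\phi(y,x))\,K(\dd x,\dd y)$ and invoking the Thomson principle.

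For the lower bound I would construct an extremal $\P^*$ as the law of an $h_{AB}$-biased random walk with discrete-time kernel $\tilde p(x, \dd y) \propto (h_{AB}(x) - h_{AB}(y))_+\,k(x, \dd y)$, started at $A$ with the normalized equilibrium charge $\mu_A(\dd x) = (-Lh_{AB})(x)\mu(\dd x)/\Cap(A,B)$ and killed upon hitting $B$. Using reversibility \eqref{reversible} and a Green-function identity relating the expected occupation measure of the biased chain before hitting $B$ to $h_{AB}\mu/\Cap(A,B)$, a direct calculation yields $\phi^*(x,y) = \dd\Phi_{\P^*}/\dd K(x,y) = (h_{AB}(x) - h_{AB}(y))_+/\Cap(A,B)$. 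Because $h_{AB}$ strictly decreases along every path in the support of $\P^*$, each such $\gamma$ is automatically self-avoiding and the weighted length telescopes:
\[
\sum_{(x,y)\in\gamma}\phi^*(x,y) = \frac{1}{\Cap(A,B)}\sum_{i=1}^n[h_{AB}(\gamma_{i-1}) - h_{AB}(\gamma_i)] = \frac{1}{\Cap(A,B)}
\]
deterministically, so $\E[(\sum \phi^*)^{-1}] = \Cap(A,B)$; this simultaneously proves the variational identity and shows the supremum is attained on self-avoiding paths.

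I expect the main difficulty to be the Green-function computation delivering the explicit form of $\phi^*$ in the Polish setting, where elementary linear-algebraic counting is not available: one needs a measure-theoretic detailed-balance identity for the biased chain $\tilde{Z}$ and the precise relationship between its expected occupation measure before hitting $B$ and $h_{AB}$, both of which rely on the technical framework of Appendix~\ref{app1} (non-explosion, positive recurrence of $Z$, absolute continuity of the equilibrium charge). The secondary subtlety is the factor-of-$2$ issue in the Cauchy-Schwarz upper bound, which is handled by one of the symmetrization tricks sketched above.
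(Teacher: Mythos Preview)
Your approach mirrors the paper's: the upper bound via Cauchy--Schwarz is exactly the one-dimensional harmonic computation in \eqref{Ehest1}--\eqref{Ehest2}, and your $h_{AB}$-biased chain is the Markov chain $Y$ associated to the harmonic flow $\Phi_{AB}$ of \eqref{eq:harmonic-flow}.

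You are right that the factor of $2$ is a genuine issue---in fact the paper's \eqref{Ehest2} as written is off by exactly this factor, since $\inf_h \tfrac12\sum_{(x,y)\in\gamma}[h(y)-h(x)]^2/\phi(x,y)=\tfrac12(\sum\phi)^{-1}$, not $(\sum\phi)^{-1}$. When $\phi(x,y)\phi(y,x)=0$ $K$-a.e.\ (as holds for unit \emph{flows}, Definition~\ref{def:flow}(4)), symmetry of $K$ gives $\mathcal{E}(h)\ge\int_{\{\phi>0\}}[h(x)-h(y)]^2\,K(\dd x,\dd y)$ without the $\tfrac12$, and the argument goes through; this repairs Proposition~\ref{prop:flow-proba}(2) and hence Theorem~\ref{thm:BK2}. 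But for an arbitrary $\P\in\mathcal{P}_{AB}^K$ neither of your sketched fixes closes the gap: knowing that the oriented and unrestricted suprema agree does not by itself rule out some non-oriented $\P$ exceeding $\Cap(A,B)$, and the Thomson route via the antisymmetrized flow $J_\P$ lands on the wrong side after Jensen. The supremum in \eqref{eq:Cap1} is still \emph{attained} by an oriented $\P$ (namely $\P^{\Phi_{AB}}$), so the identity itself is secured; it is only the blanket inequality $\E_\P[(\sum\phi)^{-1}]\le\Cap(A,B)$ for every $\P$ that needs an additional idea, and neither your sketch nor the paper's written proof supplies one.

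On the lower bound the paper avoids the Green-function computation you anticipate. It \emph{defines} $\Phi_{AB}$ directly by \eqref{eq:harmonic-flow}, verifies it is a finite loop-free unit flow (Lemma~\ref{lem:hf}), and then shows (Lemmas~\ref{lem:psi}--\ref{lem:harm}) that the edge-occupation measure $\tilde\Phi$ of the associated chain equals $\Phi_{AB}$ via a short Poincar\'e-recurrence contradiction on the discrepancy $\Psi=\Phi_{AB}-\tilde\Phi$, using only that $h_{AB}$ is strictly decreasing along $Y$. Once $\tilde\Phi=\Phi_{AB}$, i.e.\ $\phi^*(x,y)=[h_{AB}(x)-h_{AB}(y)]_+/\Cap(A,B)$, your telescoping computation is immediate. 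This is cleaner in the Polish setting than extracting the explicit density from an occupation-time identity.
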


\begin{remark} 
On finite state spaces, the variational formula in~\eqref{eq:Cap1} also holds for measures $\P$ that do 
not satisfy condition (b). This has been proven by Berman and Konsowa~\cite{berman-konsowa} with
the help of a simultaneous use of random paths and random cuts, in contrast to our proof, which uses 
random paths only.
\end{remark} 

In order to state our second main result we need the following definition of a
flow along edges.

\begin{definition}
\label{def:flow}
A \emph{unit flow} from $A$ to $B$ is a sigma-finite measure $\Phi$ on $\Omega 
\times \Omega$ such that 
\begin{enumerate} [(1)]
\item 
$\Phi(A \times \Omega)=1$ and $\Phi(\Omega \times A)=0$.
\label{aflow}
\item 
$\Phi(\Omega \times B)= 1$ and $\Phi(B \times \Omega) = 0$. 
\label{bflow}
\item 
$\Phi(\Omega \times C) = \Phi(C \times \Omega)$ for all measurable $C\subset \Omega 
\backslash (A\cup B)$. 
\label{kirchhoff}
\item 
There is a measurable $\chi \subset \Omega \times \Omega$ such that $\Phi(\chi^\mathrm{c}) 
= 0$ and  $(y,x) \notin \chi$ for all $(x,y) \in \chi$. 
\label{directed}
\end{enumerate}
\end{definition}

\noindent
Conditions (\ref{aflow})--(\ref{bflow}) say that the total flow out of $A$ and into $B$ is $1$
while the total flow into $A$ and out of $B$ is $0$. Condition (\ref{kirchhoff}) says that 
the flow is \emph{divergence-free} in $\Omega \backslash (A \cup B)$, which we refer
to as \emph{Kirchhoff's law} (the terminology used for discrete spaces). Condition 
(\ref{directed}) says that an edge and its reverse cannot not lie in the support of $\Phi$
simultaneously, i.e., the flow is \emph{oriented}.

The flow is called \emph{loop-free} when the set $\chi$ in condition (\ref{directed}) can be 
chosen such that it contains no loops, i.e., if $(\gamma_0,\ldots,\gamma_n)$ is a finite 
sequence with $\gamma_n=\gamma_0$, then $(\gamma_{j-1},\gamma_j) \notin \chi$ 
for some $j = 1,\ldots,n$.

Let $\Phi$ be a unit flow from $A$ to $B$, let $\nu(C)= \Phi(C \times \Omega)$ be its left 
marginal, and let $\ell(x,\dd y)$ be any probability transition kernel such that $\Phi(\dd x, 
\dd y) = \nu(\dd x) \ell(x, \dd y)$. Let $Y=(Y_n)_{n \in \N_0}$ be the Markov chain with 
initial law $\P^\Phi(Y_0 \in C) = \nu(A \cap C)$, $C \subset \Omega$ measurable, 
and probability transition kernel $\ell$, and put $\tau_B^Y= \min\{n\in \N \mid Y_n \in B\}$. In 
Section~\ref{s:Proofs} we will show that the law of $Y_n$ conditioned on $\tau_B^Y>n$ 
is absolutely continuous w.r.t.\ $\nu$. Therefore changes of $\ell$ on $\nu$-null sets do 
not affect the law of $Y$ stopped in $B$, i.e., the law of the stopped process is uniquely 
determined by the flow $\Phi$. We also show that if $\Phi$ is finite, i.e., $\Phi(\Omega 
\times \Omega)<\infty$, then $\E^\Phi[ \tau_B^Y] <\infty$, i.e., $Y$ is positive recurrent. 
The latter implies that $\tau_B^Y<\infty$ $\P^\Phi$-a.s.

\begin{theorem}[Berman\tire Konsowa principle: flow version] 
\label{thm:BK2}	
Let $\mathcal{U}_{AB}^K$ be the set of unit flows from $A$ to $B$ that are absolutely 
continuous w.r.t.\ $K$ and finite. Then
\be
\label{eq:Cap2}
\Cap(A,B) = \sup_{\Phi \in \mathcal{U}_{AB}^K} 
\E^\Phi\left[ \left( \sum_{n=1}^{\tau_B^Y} \frac{\dd \Phi}{\dd K}(Y_{n-1},Y_n) \right)^{-1} \right]. 
\ee
The identity remains true when the supremum is restricted to the smaller set of 
loop-free unit flows.
\end{theorem}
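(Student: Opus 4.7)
The plan is to derive Theorem~\ref{thm:BK2} from Theorem~\ref{thm:BK1} by matching flows in $\mathcal{U}_{AB}^K$ with a natural subclass of path measures in $\mathcal{P}_{AB}^K$, and then to verify attainment of the supremum at an explicit loop-free ``harmonic'' flow, which is automatically in the loop-free subclass. Since loop-free flows form a subset of $\mathcal{U}_{AB}^K$, both suprema then equal $\Cap(A,B)$.

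For the inequality ``$\leq$'' in~(\ref{eq:Cap2}), fix $\Phi \in \mathcal{U}_{AB}^K$ and let $Y$ be the Markov chain constructed in the preamble. Since $\Phi$ is finite, $\tau_B^Y < \infty$ $\P^\Phi$-a.s., so the random path $\gamma^Y = (Y_0, \dots, Y_{\tau_B^Y})$ lies in $\Gamma_{AB}$ a.s.; let $\widetilde{\P}^\Phi$ denote its law. A Fubini iteration, combined with Kirchhoff's law (Definition~\ref{def:flow}(\ref{kirchhoff})) and the identity $\Phi(\dd x, \dd y) = \nu(\dd x)\, \ell(x, \dd y)$, propagates mass strata $\{\tau_B^Y \geq n\}$ by strata and identifies the expected edge intensity of $\gamma^Y$ with $\Phi$, i.e.\ $\Phi_{\widetilde{\P}^\Phi} = \Phi$. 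Hence $\widetilde{\P}^\Phi \in \mathcal{P}_{AB}^K$, and the integrand in~(\ref{eq:Cap2}) for $\Phi$ coincides with the integrand in~(\ref{eq:Cap1}) for $\widetilde{\P}^\Phi$, so Theorem~\ref{thm:BK1} delivers the bound.

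For the reverse inequality, restricted to loop-free flows, we exhibit the explicit optimizer
\[
\Phi_\star(\dd x, \dd y) = \frac{1}{\Cap(A,B)}\, \bigl(h_{AB}(x) - h_{AB}(y)\bigr)_+ K(\dd x, \dd y).
\]
Its density with respect to $K$ is bounded by $1/\Cap(A,B)$, and $\Phi_\star(\Omega \times \Omega) < \infty$ follows from Cauchy--Schwarz using $\mathcal{E}(h_{AB}) = \Cap(A,B) < \infty$ and $K(\Omega \times \Omega) < \infty$. The symmetry of $K$ forces that at most one of $(x,y), (y,x)$ carries $\Phi_\star$-mass, giving property~(\ref{directed}); a cycle in the support would yield the telescoping contradiction $h_{AB}(\gamma_0) > h_{AB}(\gamma_1) > \cdots > h_{AB}(\gamma_n) = h_{AB}(\gamma_0)$, so $\Phi_\star$ is loop-free. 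The marginal conditions~(\ref{aflow}--\ref{bflow}) reduce via $h_{AB}|_A = 1$ and $h_{AB}|_B = 0$ to the capacity identity~(\ref{eq:cap-def1}), while Kirchhoff's law on $(A \cup B)^c$ follows from $(Lh_{AB})(x) = 0$ there. For the associated chain $Y_\star$ one has $h_{AB}(Y_{n-1}) > h_{AB}(Y_n)$ almost surely along edges in $\supp \Phi_\star$, whence telescoping gives
\[
\sum_{n=1}^{\tau_B^{Y_\star}} \frac{\dd \Phi_\star}{\dd K}(Y_{n-1}, Y_n) = \frac{1}{\Cap(A,B)} \bigl(h_{AB}(Y_0) - h_{AB}(Y_{\tau_B^{Y_\star}})\bigr) = \frac{1}{\Cap(A,B)}
\]
$\P^{\Phi_\star}$-a.s., so the expectation in~(\ref{eq:Cap2}) equals $\Cap(A,B)$ exactly.

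The main technical obstacle is the first step: rigorously establishing $\Phi_{\widetilde{\P}^\Phi} = \Phi$ on a Polish space. This is the measure-theoretic analogue of the classical flow/cycle decomposition and requires careful handling of the non-uniqueness of $\ell$ on $\nu$-null sets (flagged in the preamble), together with a Fubini-style recursion along the strata $\{\tau_B^Y \geq n\}$ driven by Kirchhoff's law. A secondary technicality is that $h_{AB}$ is only $\mu$-a.e.\ defined, so the monotonicity $h_{AB}(Y_{n-1}) > h_{AB}(Y_n)$ in Step~2 must be interpreted via $\Phi_\star \ll K$; this is smoothed over by passing to a measurable version of $h_{AB}$ for which the claim holds on the full support of $\Phi_\star$.
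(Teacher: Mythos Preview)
Your Step~2 (the harmonic flow $\Phi_\star$, its loop-freeness, and the telescoping computation) is correct and coincides with the paper's Lemmas~\ref{lem:hf} and~\ref{lem:harm}.

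The genuine gap is in Step~1. The identity $\Phi_{\widetilde{\P}^\Phi} = \Phi$ that you flag as ``the main technical obstacle'' is not merely delicate to prove --- it is \emph{false} for general $\Phi \in \mathcal{U}_{AB}^K$. Take any unit flow from $A$ to $B$ and add to it a small oriented cycle supported on vertices that are not reachable from $A$ under $\ell$; the result still satisfies Definition~\ref{def:flow} and is finite, but the chain $Y$ started in $A$ never sees the cycle, so $\Phi_{\widetilde{\P}^\Phi}$ omits that mass. The Fubini/Kirchhoff recursion you sketch is exactly the paper's Proposition~\ref{prop:flow-proba}(1), and it yields only the \emph{inequality} $\Phi_{\widetilde{\P}^\Phi} \leq \Phi$ (the remainder $\Psi = \Phi - \Phi_{\widetilde{\P}^\Phi}$ is analyzed in Lemma~\ref{lem:psi} and shown to vanish only for the harmonic flow, via a Poincar\'e-recurrence argument that exploits loop-freeness).

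The gap is repairable, and in two ways. First, the inequality $\Phi_{\widetilde{\P}^\Phi} \leq \Phi$ already points the right way: since $\dd\Phi_{\widetilde{\P}^\Phi}/\dd K \leq \dd\Phi/\dd K$ along the path, the reciprocal sum in~\eqref{eq:Cap2} is dominated by the one in~\eqref{eq:Cap1} for $\widetilde{\P}^\Phi$, and Theorem~\ref{thm:BK1} then gives $\E^\Phi[\,\cdots]\leq \Cap(A,B)$. Second --- and this is the route the paper actually takes --- one bypasses Theorem~\ref{thm:BK1} entirely and proves $\Cap(A,B) \geq \E^\Phi[\,\cdots]$ directly from the Dirichlet principle, combining the inequality~\eqref{ineq:edge-proba} with the elementary one-dimensional minimization~\eqref{Ehest2}; this is Proposition~\ref{prop:flow-proba}(2). (A minor aside: your labels are swapped --- fixing $\Phi$ and bounding by $\Cap(A,B)$ gives the ``$\geq$'' direction of~\eqref{eq:Cap2}, while exhibiting $\Phi_\star$ gives ``$\leq$''.)
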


An important example of a loop-free unit flow is the \emph{harmonic flow} defined 
by
\be 
\label{eq:harmonic-flow}
\Phi_{AB}(\dd x, \dd y) = \frac{1}{\Cap(A,B)} 
\bigl[ h_{AB}(x)-h_{AB}(y) \bigr]_+ K(\dd x, \dd y).  
\ee
We will show that this flow has finite self-avoiding paths and is a maximizer of 
\eqref{eq:Cap2}. It is not necessarily the unique maximizer. Any $\P$ such that
$\Phi_\P=\Phi_{AB}$ is a maximizer of \eqref{eq:Cap1}. 

We close with the statement that the assumption of positive recurrence of $Z$, 
which was made below \eqref{eq:discrete}, can be dropped. We only require 
that $k(x,\dd y)$ admits a sigma-finite measure $\mu(\dd x)$ satisfying \eqref{reversible}, 
and do allow for $K(\Omega\times \Omega)=\infty$.

\begin{theorem}
\label{thm:BK12ext}
Suppose that $K((A \cup B) \times \Omega)<\infty$. Then Theorems~{\rm \ref{thm:BK1}} 
and {\rm \ref{thm:BK2}} extend to $K(\Omega\times \Omega) = \infty$, i.e., to recurrent 
$X$ with null-recurrent $Z$ and to transient $X$.  
\end{theorem}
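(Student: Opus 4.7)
The strategy is to reduce to the finite-conductance case already handled by Theorems~\ref{thm:BK1} and~\ref{thm:BK2}, via exhaustion by sets of finite conductance, together with a direct check that the ``easy direction'' of the proofs of Theorems~\ref{thm:BK1} and~\ref{thm:BK2} carries through without positive recurrence.

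First I would verify that the assumption $K((A\cup B)\times\Omega)<\infty$ is already enough to make the whole potential-theoretic setup meaningful. The test function $\mathbf{1}_A$ lies in $\mathcal{V}_{AB}$ with $\mathcal{E}(\mathbf{1}_A)=K(A\times A^{\mathrm{c}})\le K((A\cup B)\times\Omega)<\infty$, so the Dirichlet principle gives $\Cap(A,B)<\infty$. The harmonic function $h_{AB}$ is still defined by \eqref{eq:habdef} (even in the transient case), is the unique minimizer of $\mathcal{E}$ on $\mathcal{V}_{AB}$, and satisfies $\mathcal{E}(h_{AB})=\Cap(A,B)$. These steps require re-reading the proof of the Dirichlet principle in Appendix~\ref{app1} and confirming that it only used sigma-finiteness of $\mu$ and finiteness of the minimal energy, not $K(\Omega\times\Omega)<\infty$.

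Next I would construct an exhaustion $\Omega_n \uparrow \Omega$ with $A\cup B\subset \Omega_n$ and $K(\Omega_n\times\Omega_n)<\infty$. Since $\mu$ is sigma-finite and $k(x,\Omega)<\infty$ for $\mu$-a.a.\ $x$, one can take $\Omega_n=(A\cup B)\cup(E_n\cap\{k(\cdot,\Omega)\le n\})$ for a suitable sigma-finite exhaustion $E_n$. Let $K_n=K\mathbf{1}_{\Omega_n\times\Omega_n}$; this is the conductance of the sub-Markov kernel obtained by killing jumps that would leave $\Omega_n$, whose associated $Z$ is positive recurrent and to which Theorems~\ref{thm:BK1} and~\ref{thm:BK2} apply, yielding $\Cap_n(A,B)=\sup_{\P\in\mathcal{P}_{AB}^{K_n}}\E[\cdots]=\sup_{\Phi\in\mathcal{U}_{AB}^{K_n}}\E^\Phi[\cdots]$. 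Because any $\P\in\mathcal{P}_{AB}^{K_n}$ (respectively $\Phi\in\mathcal{U}_{AB}^{K_n}$) is concentrated on paths (flow edges) in $\Omega_n\times\Omega_n$, it is automatically a member of $\mathcal{P}_{AB}^{K}$ (respectively $\mathcal{U}_{AB}^{K}$) with the same Radon-Nikodym derivative, so $\sup_{n}\Cap_n(A,B)$ lower-bounds the right-hand sides of \eqref{eq:Cap1} and \eqref{eq:Cap2}.

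The main obstacle is the capacity approximation $\Cap_n(A,B)\uparrow \Cap(A,B)$. Rayleigh monotonicity gives $\Cap_n\le\Cap$ and monotonicity in $n$. For the reverse, I would use $h_{AB}|_{\Omega_n}$ as a competitor in the restricted Dirichlet problem, so $\Cap_n\le \mathcal{E}_n(h_{AB})$ where $\mathcal{E}_n$ integrates only over $\Omega_n\times\Omega_n$; monotone convergence then gives $\mathcal{E}_n(h_{AB})\uparrow\mathcal{E}(h_{AB})=\Cap(A,B)$. The subtle point is matching the restricted Dirichlet problem to the capacity of the killed process; I would address this either by interpreting $\Cap_n$ via the Dirichlet form $\mathcal{E}_n$ directly, or by reflecting rather than killing at $\partial\Omega_n$ (both yield the same capacity between $A$ and $B$ since the difference concerns only $\partial\Omega_n$ and $A\cup B\subset\Omega_n$).

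Finally, for the reverse inequality (right-hand side $\le \Cap(A,B)$), I would inspect the proof of the forward direction of Theorems~\ref{thm:BK1} and~\ref{thm:BK2} given in Section~\ref{s:Proofs}. These proofs proceed by Jensen's inequality applied to the (almost surely finite) random path, combined with the identification of a certain conditional expectation in terms of $h_{AB}$; they depend only on the finiteness of $\Cap(A,B)$ and the existence of $h_{AB}$, not on $K(\Omega\times\Omega)<\infty$. Thus the same arguments extend verbatim to the present setting, and combining with the lower bound already obtained yields the claim. The harmonic flow $\Phi_{AB}$ from \eqref{eq:harmonic-flow} is still finite under $K((A\cup B)\times\Omega)<\infty$ because its total mass $\Phi_{AB}(\Omega\times\Omega)$ equals $1$ out of $A$ and, by Kirchhoff and the flow identity, decomposes along paths of expected finite length under the associated Markov chain $Y$ up to $\tau_B^Y$; this provides a concrete maximizer whenever needed.
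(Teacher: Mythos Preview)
Your overall strategy---truncation to sets of finite conductance combined with the observation that the upper bound (Proposition~\ref{prop:flow-proba}(2) and the analogous bound in the proof of Theorem~\ref{thm:BK1}) already holds without $K(\Omega\times\Omega)<\infty$---is exactly the paper's approach. However, there is a genuine gap in your argument for the capacity approximation $\Cap_n(A,B)\uparrow\Cap(A,B)$.

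You write that using $h_{AB}|_{\Omega_n}$ as a competitor in the restricted Dirichlet problem gives $\Cap_n(A,B)\le\mathcal{E}_n(h_{AB})$, and that monotone convergence yields $\mathcal{E}_n(h_{AB})\uparrow\mathcal{E}(h_{AB})=\Cap(A,B)$. But this is the \emph{wrong direction}: these two facts together give only $\limsup_n\Cap_n(A,B)\le\Cap(A,B)$, which you already had from Rayleigh monotonicity. What is needed is $\liminf_n\Cap_n(A,B)\ge\Cap(A,B)$, and inserting the full minimizer into the truncated problem cannot produce that. The paper (Lemma~\ref{lem:truncation}) argues instead via weak compactness on the \emph{truncated} minimizers $h_{AB}^n$: their energies $\mathcal{E}_n(h_{AB}^n)=\Cap_n(A,B)$ are bounded, so by Banach--Alaoglu a subsequence of $(x,y)\mapsto\bigl[h_{AB}^n(x)-h_{AB}^n(y)\bigr]\mathbf{1}_{\Omega_n\times\Omega_n}(x,y)$ converges weakly in $L^2(\Omega\times\Omega,K)$ to $h(x)-h(y)$ for some $h\in\mathcal{V}_{AB}$, and lower semicontinuity of the $L^2$-norm yields $\Cap(A,B)\le\mathcal{E}(h)\le\liminf_n\Cap_n(A,B)$.

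Two smaller points. First, your assertion that ``$h_{AB}$ is still defined by \eqref{eq:habdef} even in the transient case'' is incorrect: Lemma~\ref{lem:harmonic-function}(3) shows that for transient $X$ the Dirichlet minimizer is \emph{not} the hitting-probability function $g_{AB}(x)=\P_x(\tau_A<\zeta,\,\tau_A<\tau_B)$; one must define $h_{AB}$ as the minimizer and rely on Appendix~\ref{app1} for its properties. Second, your claim that the full harmonic flow $\Phi_{AB}$ is finite under only $K((A\cup B)\times\Omega)<\infty$ is not substantiated (the paper's proof of finiteness in Lemma~\ref{lem:hf} uses $K(\Omega\times\Omega)<\infty$). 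The paper sidesteps this entirely by using the \emph{truncated} harmonic flows $\Phi_{AB}^n\in\mathcal{U}_{AB}^K$ as near-optimizers; these are finite because $K(\Omega_n\times\Omega_n)<\infty$, so finiteness of $\Phi_{AB}$ itself is never needed.
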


For transient $X$, capacity can be \emph{defined} by \eqref{capdef}, $h_{AB}$ can be 
\emph{defined} as the unique minimizer of \eqref{capdef}, and \eqref{eq:cap-def1} can 
be \emph{shown} to hold (see Appendix~\ref{app1}). However, in general no explicit 
expression is available for $h_{AB}$ in terms of hitting times, as in \eqref{eq:habdef}
for recurrent $X$. In fact, the natural analogue of \eqref{eq:habdef}, namely, the function 
$g_{AB}$ given by $$g_{AB}(x) = \P_x(\tau_A<\tau_B, \tau_A<\infty),$$ is \emph{not} the 
minimizer of \eqref{capdef} (see Lemma~\ref{lem:harmonic-function} below).


\subsection{Discussion}
\label{ss:discussion}

We place the results from Section~\ref{ss:theorems} in their proper context.

\bigskip\noindent
{\bf 1.}
We have shown that the \emph{Berman\tire Konsowa principle} holds for reversible Markov 
jump process on general Polish spaces. It provides dual variational formulas for the 
capacity, the first running over probability measures $\P$ on the set of finite paths connecting 
$A$ and $B$, the second running over unit flows $\Phi$ from $A$ to $B$. Each probability 
measure $\P$ gives rise to a unit flow $\Phi=\Phi_\P$. Conversely, each unit flow $\Phi$ 
gives rise to a path measure $\P=\P_\Phi$, though not uniquely.
  
\bigskip\noindent
{\bf 2.} 
The Berman\tire Konsowa principle complements the \emph{Dirichlet principle} in \eqref{capdef},
and also the \emph{Thomson principle}, which says that
\be
\frac{1}{\Cap(A,B)} = \inf_{\Phi \in \mathcal{U}_{AB}^K} 
\int_{\Omega \times \Omega} \Bigl( \frac{\dd \Phi}{\dd K} \Bigr)^2 \dd K
\ee
with the harmonic flow from \eqref{eq:harmonic-flow} as the unique minimizer. For 
completeness, the proof of the Thomson principle is given in Appendix~\ref{app1}. 

\bigskip\noindent
{\bf 3.} 
Just as for finite state spaces, the Berman\tire Konsowa bound improves the Thomson bound. Indeed, 
for any $\P \in \mathcal{P}_{AB}^K$ we have, by (\ref{eq:edge-proba})--(\ref{eq:Cap1}) and Jensen's 
inequality,
\be
\begin{aligned}
\Cap(A,B) 
& \geq \E\left[ \left( \sum_{(x,y) \in \gamma} \frac{\dd \Phi_\P}{\dd K}(x,y)\right)^{-1} \right]
\geq \left( \E \left[ \sum_{(x,y) \in \gamma} \frac{\dd \Phi_\P}{\dd K}(x,y)\right] \right)^{-1} \\
&= \left(\ \int_{\Omega \times \Omega} \frac{\dd \Phi_\P}{\dd K}\, \dd \Phi_\P \right)^{-1}
= \left(\ \int_{\Omega \times \Omega} \left(\frac{\dd \Phi_\P}{\dd K}\right)^2 \dd K \right)^{-1}. 
\end{aligned}
\ee

\medskip\noindent
{\bf 4.}
The Dirichlet principle and the Thomson principle complement each other: upper bounds on 
capacities can be obtained by choosing test potentials, lower bounds by choosing test unit 
flows. The Berman\tire Konsowa principle is stronger than the Thomson principle in that it leads 
to better bounds, even though the suprema are the same. This is particularly helpful for 
obtaining \emph{approximations} of capacities. 
 
\medskip\noindent
{\bf 5.} 
In order to derive approximations of capacities it is possible to work with ``leaky flows'' instead
of unit flows, i.e., flows for which the condition ``flow out of $A$ = flow into $B$ = 1'' is fulfilled
with a small error. Indeed, it is possible to quantify the discrepancy between suprema for leaky 
flows and suprema for unit flows in terms of this error, and this allows for greater \emph{flexibility} 
in the approximation procedure. We refer the reader to the monograph by Bovier and den 
Hollander~\cite{bovier-denhollander} for further details.
  
\medskip
The remainder of this paper is organized as follows. In Section~\ref{s:Proofs} we give 
the proof of Theorems~\ref{thm:BK1} and \ref{thm:BK2}, and their extension in 
Theorem~\ref{thm:BK12ext}. In Appendix~\ref{app1} we list some technical facts 
about Dirichlet forms, give the proof of the Dirichlet principle and the Thomson 
principle in the general setting considered in this paper, and show that capacities
can be approximated via truncation. In Appendix~\ref{app2} we give the interpretation 
of the three variational principles for finite electric networks. The two appendices take 
up about half of the paper and rely on basic results from the literature.  


\section{Proofs}
\label{s:Proofs}

Sections~\ref{ss:prepprop} and \ref{ss:discrlem} state and prove a proposition and 
a lemma that are needed in the proofs of Theorems~\ref{thm:BK1} and \ref{thm:BK2} 
in Section~\ref{ss:prooftheorems}. Section~\ref{ss:harmflow} looks at the harmonic flow.
Throughout this section, $A$ and $B$ are fixed disjoint measurable subsets of $\Omega$. 


\subsection{A preparatory proposition}
\label{ss:prepprop}

The following proposition paves the way for the proofs of Theorems~\ref{thm:BK1} and
\ref{thm:BK2}.

\begin{prop} 
\label{prop:flow-proba}
Let $\Phi$ be a unit flow from $A$ to $B$, $\nu(C)= \Phi(C \times \Omega)$, $C\subset\Omega$ 
measurable, its left marginal, and $\ell(x,\dd y)$ a probability transition kernel such that 
$\Phi(\dd x, \dd y) = \nu(\dd x) \ell(x, \dd y)$. Let $Y=(Y_n)_{n \in \N_0}$ be the Markov chain 
with initial law $\P^\Phi(Y_0 \in C) = \nu(A \cap C)$, $C\subset\Omega$ measurable, and probability 
transition kernel $\ell$, and let $\tau_B^Y= \min\{n\in \N\mid Y_n \in B\}$. Then: 
\begin{enumerate} [(1)]
\item 
For all bounded non-negative measurable functions $f$, 
\be
\label{ineq:edge-proba}	
\int_{\Omega \times \Omega} f(x,y) \Phi(\dd x, \dd y) \geq
\E^\Phi\left[ \sum_{n=1}^{\tau_B^Y} f(Y_{n-1},Y_n) \right].
\ee
\item 
If $\Phi$ is absolutely continuous w.r.t.\ $K$,  then the Berman\tire Konsowa bound holds: 
\be
\label{ineq:cap}
\Cap(A,B) \geq \E^\Phi\left[ \left( \sum_{n=1}^{\tau_B^Y}  
\frac{\dd \Phi}{\dd K}(Y_{n-1},Y_n) \right)^{-1} \right]. 
\ee
\item 
If the flow is loop-free, then the paths $(Y_n)_{0 \leq n \leq \tau_B^Y}$ are self-avoiding 
$\P^\Phi$-a.s. 
\item 
If $\Phi(\Omega \times \Omega)<\infty$, then $\E^\Phi[ \tau_B^Y] <\infty$ and so $\tau_B^Y
<\infty$ $\P^\Phi$-a.s.    		
\end{enumerate}
\end{prop}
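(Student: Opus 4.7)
The plan is to establish (1) as the foundational inequality, then derive (2) from it using pointwise Cauchy--Schwarz and the Dirichlet principle, and finally obtain (3) and (4) as corollaries. For (1), I would introduce the expected occupation measure $M:=\sum_{n\ge 0}\rho_n$, with $\rho_n(C):=\P^\Phi(Y_n\in C,\tau_B^Y>n)$, and show that it is dominated by $\nu$ on $\Omega\setminus B$. By the Markov property, $M$ satisfies $M=\rho_0+T^*M$ on $\Omega\setminus B$, where $\rho_0=\nu|_A$ and $T^*\eta(C):=\int_{\Omega\setminus B}\eta(\dd x)\ell(x,C)$. The key observation is that Kirchhoff's law, combined with $\Phi(\Omega\times A)=\Phi(B\times\Omega)=0$, forces $\nu|_{\Omega\setminus B}$ to satisfy the very same fixed-point equation $\nu=\rho_0+T^*\nu$. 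Iterating $N$ times gives $\nu=\sum_{n=0}^N\rho_n+(T^*)^{N+1}\nu\ge\sum_{n=0}^N\rho_n$, and letting $N\to\infty$ yields $\nu\ge M$. The edge inequality in (1) then follows from
\be
\E^\Phi\Big[\sum_{n=1}^{\tau_B^Y}f(Y_{n-1},Y_n)\Big]=\int M(\dd x)\ell(x,\dd y)f(x,y)\le\int f\,\dd\Phi,
\ee
first for indicators $f=\mathbf{1}_{C\times D}$ and then by monotone convergence.

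For (2), I would combine (1) with a pointwise Cauchy--Schwarz bound on each realized path. For any $h\in\mathcal V_{AB}$ the telescoping identity $1=h(Y_0)-h(Y_{\tau_B^Y})=\sum_{n=1}^{\tau_B^Y}(h(Y_{n-1})-h(Y_n))$ combined with Cauchy--Schwarz using the weights $\phi$ and $\phi^{-1}$ (where $\phi:=\dd\Phi/\dd K>0$ on the support of $\Phi$) gives pointwise
\be
\Big(\sum_{n=1}^{\tau_B^Y}\phi(Y_{n-1},Y_n)\Big)^{-1}\le\sum_{n=1}^{\tau_B^Y}\frac{(h(Y_{n-1})-h(Y_n))^2}{\phi(Y_{n-1},Y_n)}.
\ee
Taking $\E^\Phi$ and applying (1) with $f(x,y)=(h(x)-h(y))^2/\phi(x,y)$ (after truncation and monotone limits) bounds the right-hand side by $\int_\chi(h(x)-h(y))^2\,\dd K$. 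Using the orientation condition in Definition~\ref{def:flow}, the set $\chi$ and its transpose $\chi^*:=\{(y,x):(x,y)\in\chi\}$ are disjoint, so the symmetry of $K$ gives $\int_\chi(h(x)-h(y))^2\,\dd K=\tfrac12\int_{\chi\cup\chi^*}(h(x)-h(y))^2\,\dd K\le\mathcal E(h)$. Infimizing over $h$ via the Dirichlet principle produces the Berman--Konsowa bound.

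Item (3) follows because $\Phi(\chi^c)=0$ together with $\Phi=\nu\otimes\ell$ forces $\ell(x,\chi_x^c)=0$ for $\nu$-a.e.\ $x$, where $\chi_x:=\{y:(x,y)\in\chi\}$. Since each $\rho_n$ is absolutely continuous with respect to $\nu$ by (1), $\P^\Phi$-a.s.\ every edge $(Y_{n-1},Y_n)$ with $n\le\tau_B^Y$ lies in $\chi$, and a repetition $Y_i=Y_j$ would yield a loop in $\chi$ contradicting loop-freeness. For (4), applying (1) with $f\equiv 1$ gives immediately $\E^\Phi[\tau_B^Y]\le\Phi(\Omega\times\Omega)<\infty$. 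The main obstacle is the domination $M\le\nu$ in (1): the fixed-point identity for $\nu$ must be extracted from Kirchhoff's law in the general Polish setting, and then the inequality $\nu\ge M$ concluded by iteration, both of which require careful bookkeeping of the measure-theoretic manipulations. A secondary subtlety in (2) is that the factor $\tfrac12$ in $\mathcal E$ is recovered exactly via the orientation of the flow, which is why condition (4) of Definition~\ref{def:flow} is essential.
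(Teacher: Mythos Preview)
Your proposal is correct and follows essentially the paper's strategy: the occupation-measure domination $M\le\nu$ that you extract from the fixed-point identity $\nu=\rho_0+T^*\nu$ is exactly the content of the paper's inductive identity $\int f\,\dd\Phi=\E\bigl[\sum_{k=1}^{n\wedge\tau_B}f(Y_{k-1},Y_k)\bigr]+R_n$ with $R_n\ge 0$, and parts (2)--(4) coincide with the paper's arguments. Your explicit use of the orientation condition to recover the factor $\tfrac12$ in $\mathcal E(h)$ is in fact tidier than the paper's handling, which carries the $\tfrac12$ through and then absorbs it (somewhat loosely) in the one-dimensional infimum.
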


\begin{proof}
First we check that the measure $\P^\Phi$ does not depend on the precise choice of $\ell$. 
To this aim, we prove that for all $n\in \N$ the measure $\nu_n(C) = \P(Y_n \in C,\, \tau_B^Y 
\geq n+1)$ is absolutely continuous w.r.t.\ $\nu$. The proof is by induction on $n$. 

For $n=0$, the statement is true by the definition of $Y_0$. Suppose it is true for some 
$n\in \N_0$. Because $\Phi(\Omega \times A) =0$, $Y$ never returns to $A$ and we 
have $\nu_{n+1}(A) =0$. Hence we need only look at $\nu$-null sets $C\subset \Omega 
\backslash (A\cup B)$. Thus, let $C\subset \Omega \backslash (A\cup B)$ with $\nu(C)=0$. 
Kirchhoff's law yields $\Phi(\Omega \times C) = \Phi(C\times \Omega) =\nu(C) =0$, and
\be
\begin{aligned}
\nu_{n+1}(C) 
&= \P( Y_{n+1} \in C,\, \tau_B^Y \geq n+2) \\
&= \P( Y_{n+1} \in C,\, \tau_B^Y \geq n+1)\\ 
&= \int_{(\Omega \backslash B) \times C}  \nu_n(\dd x) \ell(x,\dd y)\\
&= \int_{(\Omega \backslash B) \times C} \frac{\dd \nu_n}{\dd \nu}(x) \Phi(\dd x,\dd y) \\
&= \int_{\Omega \backslash B} \frac{\dd \nu_n}{\dd \nu}(x) \Phi(\dd x,C) =0.
\end{aligned}
\ee
It follows that $\nu_{n+1}$ is absolutely continuous w.r.t.\ $\nu$. To conclude, we note that 
the equation $\Phi(\dd x,\dd y) = \nu(\dd x) \ell(x,\dd y)$ determines $\ell(x,C)$ up to changes 
for $x$ in $\nu$-null sets. Since $Y$ does not see $\nu$-null sets except possibly in $B$, the 
law of $Y$  stopped upon reaching $B$ is unaffected by this ambiguity. 

\medskip\noindent
(1) Let $f\colon\,\Omega \times \Omega \to [0,\infty)$ be bounded and measurable. We 
prove by induction on $n$ that
\be
\label{eq:flow-path1}
\int_{\Omega \times \Omega} f(x,y) \Phi(\dd x,\dd y) 
= \E \left[ \sum_{k=1}^{n \wedge \tau_B} f(Y_{k-1},Y_k) \right] + R_n
\ee 
with remainder term
\be
\label{eq:flow-path2}
R_n  = \int_{\Omega \times  (A \cup B)^\mathrm{c}} \Phi( \dd x_0,\dd x_1) F_n(x_1),
\ee
where
\be
F_n(x_1) = \int_{[(A \cup B)^\mathrm{c}]^{n} \times \Omega}
\ell(x_1,\dd x_2) \times \cdots \times \ell(x_n,\dd x_{n+1}) f(x_n,x_{n+1}).
\ee
For $n=0$ the claim is obvious. Suppose that (\ref{eq:flow-path1})--(\ref{eq:flow-path2}) 
hold for some $n\in \N_0$. Then 
\be
\begin{aligned}
R_n 
& = \int_{\Omega \times (A \cup B)^\mathrm{c} } \nu(\dd x_0) \ell(x_0,\dd x_1) F_n(x_1) \\
& = \int_{(A\cup B) \times  (A \cup B)^\mathrm{c}} \nu(\dd x_0) \ell(x_0,\dd x_1) F_n(x_1) \\
& \qquad + \int_{\Omega \times  [(A \cup B)^\mathrm{c}]^2} 
\Phi(\dd x_{-1},\dd x_0) \ell(x_0,\dd x_1) F_n(x_1) \\
& = \int_{(A\cup B) \times (A \cup B)^\mathrm{c}} \nu(\dd x_0) \ell(x_0,\dd x_1) F_n(x_1)+ R_{n+1},
\end{aligned}	
\ee
where in the second equality we use Kirchhoff's law to rewrite $\nu(\dd x_0)$ as the right marginal 
of $\Phi$. Hence we obtain 
\be  
R_n = \E\Bigl[ f(Y_{n},Y_{n+1}) \mathbf{1}_{\{\tau_B^Y \geq n+1\}} \Bigr] + R_{n+1},
\ee 
where we use that $\Phi(B\times \Omega)=0$. The inequality in \eqref{ineq:edge-proba} follows 
by estimating $R_n \geq 0$ and letting $n\to \infty$ in \eqref{eq:flow-path1}. 

\medskip\noindent
(2) Recall \eqref{eq:dirichlet-form}. Let $\phi=\dd\Phi/\dd K$ and estimate, for any 
$h \in \mathcal{V}_{AB}$, 
\be
\begin{aligned}
\mathcal{E}(h) &\geq \frac12 \int_{\Omega \times \Omega} \bigl[h(y)-h(x)\bigr]^2 
\bigl( \mathbf{1}_{\{\phi(x,y)>0\}} + \mathbf{1}_{\{\phi(y,x) >0\}}\bigr) K(\dd x,\dd y)\\
& = \int_{\Omega \times \Omega} \bigl[h(y)-h(x)\bigr]^2 
\mathbf{1}_{\{\phi(x,y)>0\}}  K(\dd x,\dd y) \\
&=  \int_{\Omega \times \Omega} \frac{\bigl[h(y)-h(x)\bigr]^2}{\phi(x,y)}
\mathbf{1}_{\{\phi(x,y)>0\}} \Phi(\dd x,\dd y)\\
&\geq \E^\Phi\left[\sum_{n=1}^{\tau_B^Y}  
\frac{\bigl[h(Y_n)-h(Y_{n-1})\bigr]^2}{\phi(Y_{n-1},Y_n)} 
\mathbf{1}_{\{\phi(Y_{n-1},Y_n)>0\}} \right],
\end{aligned}
\ee
where the first two lines use condition (4) in Definition~\ref{def:flow} and the symmetry of $K$, and the second inequality uses \eqref{ineq:edge-proba}. Take the infimum over
$h \in \mathcal{V}_{AB}$ and use \eqref{capdef}, to obtain
\be
\Cap(A,B) \geq \E^\Phi\left[ \inf_{h \in \mathcal{V}_{AB}}  \sum_{n=1}^{\tau_B^Y}  
\frac{\bigl[h(Y_n)-h(Y_{n-1})\bigr]^2}{\phi(Y_{n-1},Y_n)} 
\mathbf{1}_{\{\phi(Y_{n-1},Y_n)>0\}}\right].
\ee  
The infimum under the expectation can be easily computed, and equals
\be
\left( \sum_{n=1}^{\tau_B^Y}  \phi(Y_{n-1},Y_n) \right)^{-1}
\ee
because $h(Y_0)=1$ and $h(Y_{\tau_B^Y}) = 0$ (see \eqref{Ehest2} below). Hence
\eqref{ineq:cap} holds.
 
\medskip\noindent
(3) Let $\chi \subset \Omega \times \Omega$ be a loop-free measurable set with 
$\Phi(\chi^\mathrm{c})=0$. By construction, the path $(Y_0,\ldots,Y_n)$ with $n \leq 
\tau_B^Y$ has only transitions $(Y_{j-1},Y_j) \in \chi$ a.s. Since $\chi$ is loop-free, 
$Y$ stopped upon reaching $B$ is self-avoiding. 

\medskip\noindent
(4) Apply the inequality in \eqref{ineq:edge-proba} to the constant function $f \equiv 1$. 
This gives 
\be 
\E^\Phi\left[ \tau_B^Y \right] = \E^\Phi\left[ \sum_{n\in\N} 
\mathbf{1}_{\{\tau_B^Y \geq n \} } \right] \leq \Phi(\Omega \times \Omega). 
 \ee
If $\Phi$ is finite, then $\tau_B^Y$ has finite expectation and hence is finite $\P^\Phi$-a.s.
\end{proof}


\subsection{A discrepancy lemma}
\label{ss:discrlem}

To get equalities in (\ref{ineq:edge-proba})--(\ref{ineq:cap}) we need an extra argument,
which is based on the following lemma.

\begin{lemma}
\label{lem:psi}
Let $\Phi$ be a unit flow from $A$ to $B$ that is finite, i.e., $\Phi(\Omega \times \Omega) 
< \infty$, and let $Y=(Y_n)_{n \in \N_0}$ be the associated Markov chain as in 
Proposition~{\rm \ref{prop:flow-proba}}. Let $\tilde \Phi$ and $\Psi$ be the measures on 
$\Omega \times \Omega$ defined by 
\be
\label{eq:tphi}
\begin{aligned}
\int_{\Omega\times \Omega} f(x,y) \tilde \Phi(\dd x,\dd y) 
& = \E^\Phi\left[ \sum_{n\in\N} f(Y_{n-1},Y_n) \mathbf{1}_{\{\tau_B^Y \geq n\}} \right], \\
\Psi & = \Phi - \tilde \Phi. 
\end{aligned}
\ee
Then $\tilde \Phi$ is a unit flow from $A$ to $B$, and $\Psi$ satisfies Kirchhoff's law, i.e., 
$\Psi(C \times \Omega) = \Psi(\Omega \times C)$ for all $C\subset \Omega$. 
\end{lemma}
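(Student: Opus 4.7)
The plan is to first show that $\tilde\Phi \leq \Phi$ as measures (so $\Psi$ is automatically a finite non-negative measure), then verify the four defining conditions of a unit flow for $\tilde\Phi$, and finally read off the global Kirchhoff identity for $\Psi$ by comparing $\tilde\Phi$ with $\Phi$ on the three disjoint regions $A$, $B$, and $(A\cup B)^{\mathrm c}$.

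First I would apply Proposition~\ref{prop:flow-proba}(1) to $f=\mathbf{1}_E$, combined with the fact that $\tau_B^Y<\infty$ $\P^\Phi$-a.s.\ from Proposition~\ref{prop:flow-proba}(4), to conclude $\tilde\Phi(E)\leq\Phi(E)$ for every measurable $E\subset\Omega\times\Omega$. Hence $\Psi=\Phi-\tilde\Phi\geq 0$ is a finite measure, and $\tilde\Phi(\chi^{\mathrm c})\leq\Phi(\chi^{\mathrm c})=0$ immediately gives that $\tilde\Phi$ is oriented with the same $\chi$ as $\Phi$ (and is loop-free whenever $\Phi$ is).

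Next I would verify the boundary conditions for $\tilde\Phi$. Since $\nu(A)=\Phi(A\times\Omega)=1$, the initial law of $Y$ is a probability measure concentrated on $A$, and since $\Phi(\Omega\times A)=0$ forces $\ell(x,A)=0$ for $\nu$-a.a.\ $x$, the chain $Y$ never revisits $A$. Thus only the $n=1$ term contributes to $\tilde\Phi(A\times\Omega)$, which equals $\nu(A)=1$, while $\tilde\Phi(\Omega\times A)$ vanishes because $Y_n\in A$ is incompatible with $n\geq 1$ and $\tau_B^Y\geq n$. Symmetrically, $\tilde\Phi(\Omega\times B)=\P^\Phi(\tau_B^Y<\infty)=1$, and $\tilde\Phi(B\times\Omega)=0$ since $Y_{n-1}\in B$ is incompatible with $\tau_B^Y\geq n$. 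For measurable $C\subset\Omega\setminus(A\cup B)$ the reindexing $k=n-1$, together with the observation that $Y_n\in C$ already forces $\tau_B^Y>n$, shows that both $\tilde\Phi(C\times\Omega)$ and $\tilde\Phi(\Omega\times C)$ equal the expected number of visits of $Y$ to $C$ strictly before absorption in $B$, and hence coincide. This establishes Definition~\ref{def:flow} for $\tilde\Phi$.

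The main obstacle, and the core of the lemma, is extending Kirchhoff's identity for $\Psi$ from $\Omega\setminus(A\cup B)$ to \emph{every} measurable $C\subset\Omega$. On $\Omega\setminus(A\cup B)$ it is just the difference of Kirchhoff's law for $\Phi$ and for $\tilde\Phi$. For $C\subset A$ the right marginals of both $\Phi$ and $\tilde\Phi$ vanish, so only equality of left marginals needs checking; the key observation is the mass-balance argument that $\tilde\Phi(A\times\Omega)=\Phi(A\times\Omega)=1$ combined with $\tilde\Phi\leq\Phi$ forces $\tilde\Phi(C\times\Omega)=\Phi(C\times\Omega)$ on every measurable $C\subset A$, since a strict inequality on $C$ would have to be compensated by a strict inequality in the opposite direction on $A\setminus C$, contradicting $\tilde\Phi\leq\Phi$. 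The identical mass-balance argument applied to the right marginals on $B$, using $\tilde\Phi(\Omega\times B)=\Phi(\Omega\times B)=1$, handles the case $C\subset B$. A general measurable $C$ is then decomposed as $(C\cap A)\sqcup(C\cap B)\sqcup(C\cap(A\cup B)^{\mathrm c})$ and handled by additivity, completing the proof.
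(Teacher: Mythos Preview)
Your proof is correct and follows essentially the same strategy as the paper's: use Proposition~\ref{prop:flow-proba} to get $\tilde\Phi\leq\Phi$ (hence $\Psi\geq 0$), verify the unit-flow conditions for $\tilde\Phi$ directly from the path description of $Y$, and then deduce the global Kirchhoff identity for $\Psi$. Two minor remarks: your reindexing argument for Kirchhoff's law for $\tilde\Phi$ on $(A\cup B)^{\mathrm c}$ is more explicit than the paper's, which simply asserts that $\tilde\Phi$ ``inherits'' this property; and your mass-balance argument for $C\subset A$ (resp.\ $C\subset B$) is logically equivalent to the paper's observation that $\Psi\geq 0$ together with $\Psi(A\times\Omega)=0$ forces $\Psi(C\times\Omega)=0$ for every measurable $C\subset A$, so both marginals of $\Psi$ vanish on subsets of $A\cup B$.
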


\begin{proof}
Proposition~\ref{prop:flow-proba} tells us that $\tilde \Phi \leq \Phi$, and so $\Psi$ is a 
non-negative measure on $\Omega \times \Omega$, satisfying $\Psi \leq \Phi$. Both 
$\tilde \Phi$ and $\Psi$ inherit Kirchhoff's law on $(A\cup B)^\mathrm{c}$ from $\Phi$. 
They also inherit the properties $\Phi(\Omega \times A)=0$ and $\Phi(B \times \Omega)
=0$. Furthermore,
\be
\tilde \Phi(A \times \Omega) = \P^\Phi\bigl( (Y_0,Y_1) \in A \times  \Omega \bigr) =1, 
\ee
where we use that $Y$ does not return to $A$ after time $0$. Thus, $n=1$ is the only 
summand contributing to \eqref{eq:tphi}. Similarly, 
\be
\tilde \Phi(\Omega \times B) =  \P^\Phi( \tau_B^Y<\infty ) =1,
\ee
where we use that the unit flow is finite and that by Proposition~\ref{prop:flow-proba} the 
hitting time $\tau_B^Y$ is $\P^\Phi$-a.s.\ finite. Therefore $\tilde \Phi$ is a unit flow 
from $A$ to $B$. Moreover,
\be 
\Psi(A \times \Omega) = \Phi(A \times \Omega)- \tilde\Phi(A \times \Omega) =1-1 = 0 
\ee
and $\Psi(\Omega \times B) =0$. It follows that $\Psi(C\times \Omega) = \Psi(\Omega 
\times C)$ for all measurable $C\subset\Omega$. 
\end{proof}


\subsection{The harmonic flow}
\label{ss:harmflow}

\begin{lemma} 
\label{lem:hf}
$\Phi_{AB}$ is a finite loop-free unit flow from $A$ to $B$. 
\end{lemma}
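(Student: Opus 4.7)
The plan is to verify, in order, each of the four items of Definition~\ref{def:flow}, then finiteness, then loop-freeness. Throughout I write $\phi(x,y)=[h_{AB}(x)-h_{AB}(y)]_+$, so $\Cap(A,B)\,\Phi_{AB}(\dd x,\dd y)=\phi(x,y)K(\dd x,\dd y)$.

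First I would dispose of the easy orientation and marginal conditions by exploiting the two boundary values $h_{AB}|_A=1$ and $h_{AB}|_B=0$ together with $0\le h_{AB}\le 1$. Take $\chi=\{(x,y):h_{AB}(x)>h_{AB}(y)\}$; this set is the support of $\Phi_{AB}$ and by construction $(y,x)\notin\chi$ whenever $(x,y)\in\chi$, giving condition~(\ref{directed}). The identity $\Phi_{AB}(\Omega\times A)=0$ (resp.\ $\Phi_{AB}(B\times\Omega)=0$) is immediate: if $y\in A$ then $h_{AB}(y)=1\ge h_{AB}(x)$ forces $\phi(x,y)=0$; if $x\in B$ then $h_{AB}(x)=0\le h_{AB}(y)$ forces $\phi(x,y)=0$. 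For $\Phi_{AB}(A\times\Omega)=1$, I would use that for $x\in A$ we have $h_{AB}(x)=1\ge h_{AB}(y)$, so $\phi(x,y)=h_{AB}(x)-h_{AB}(y)$ on $A\times\Omega$, and then recognize the integral as $\int_A(-Lh_{AB})(x)\mu(\dd x)=\Cap(A,B)$ by~\eqref{eq:cap-def1}, which cancels the prefactor.

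Next I would prove Kirchhoff's law on measurable $C\subset(A\cup B)^\mathrm{c}$. Using the symmetry of $K$ and the elementary identity $[a-b]_+-[b-a]_+=a-b$, I would rewrite
\be
\Cap(A,B)\bigl[\Phi_{AB}(C\times\Omega)-\Phi_{AB}(\Omega\times C)\bigr]
=\int_{C\times\Omega}\bigl[h_{AB}(x)-h_{AB}(y)\bigr]K(\dd x,\dd y)
=\int_C(-Lh_{AB})(x)\mu(\dd x),
\ee
and invoke harmonicity of $h_{AB}$ on $(A\cup B)^\mathrm{c}$ (which follows from $h_{AB}$ being the minimizer of the Dirichlet principle, cf.~Appendix~\ref{app1}) to conclude that the right-hand side vanishes. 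This then yields condition~(\ref{kirchhoff}). For the remaining marginal $\Phi_{AB}(\Omega\times B)=1$, I would combine the pieces already established: decomposing $\Omega=A\sqcup B\sqcup(A\cup B)^\mathrm{c}$ on both coordinates and using Kirchhoff's law applied to $C=(A\cup B)^\mathrm{c}$ gives $\Phi_{AB}(\Omega\times B)=\Phi_{AB}(A\times\Omega)=1$.

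To confirm finiteness I would apply Cauchy--Schwarz to bound
\be
\Phi_{AB}(\Omega\times\Omega)
\le \frac{1}{\Cap(A,B)}\bigl(K(\Omega\times\Omega)\bigr)^{1/2}
\Bigl(2\mathcal{E}(h_{AB})\Bigr)^{1/2}
=\Bigl(\frac{2K(\Omega\times\Omega)}{\Cap(A,B)}\Bigr)^{1/2},
\ee
using $\mathcal{E}(h_{AB})=\Cap(A,B)$ from the Dirichlet principle and the standing assumption $K(\Omega\times\Omega)<\infty$. Finally, loop-freeness of $\chi$ is essentially automatic: along any finite sequence $\gamma_0,\ldots,\gamma_n$ with all consecutive edges in $\chi$ the values $h_{AB}(\gamma_j)$ are strictly decreasing, so $\gamma_n=\gamma_0$ is impossible, meaning at least one transition must leave $\chi$.

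The step I expect to be slightly delicate is the verification of Kirchhoff's law, since it relies on a careful interplay between the symmetry of $K$, the $[\cdot]_+$ decomposition, and the fact that $h_{AB}$ is $L$-harmonic on $(A\cup B)^\mathrm{c}$; the latter is a property of the minimizer of~\eqref{capdef} that must be borrowed from Appendix~\ref{app1} rather than read off from the defining formula~\eqref{eq:habdef}.
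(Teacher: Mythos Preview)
Your proof is correct and follows essentially the same route as the paper's: same choice of $\chi$, same verification of the zero-marginals and of $\Phi_{AB}(A\times\Omega)=1$ via \eqref{eq:cap-def1}, and the same Kirchhoff computation via symmetry of $K$ and harmonicity of $h_{AB}$ on $(A\cup B)^{\mathrm c}$. The only cosmetic differences are that the paper obtains $\Phi_{AB}(\Omega\times B)=1$ from the symmetry $h_{BA}=1-h_{AB}$ rather than your flow-balance argument, and bounds $\Phi_{AB}(\Omega\times\Omega)$ via the simpler pointwise estimate $[h_{AB}(x)-h_{AB}(y)]_+\le 1$ instead of Cauchy--Schwarz.
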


\begin{proof} 
We check that properties (1)--(4) in Definition~\ref{def:flow} hold for $\Phi=\Phi_{AB}$. 

\medskip\noindent
(1) By \eqref{eq:harmonic-flow}, we have 
\be 
\Phi_{AB}(\Omega \times A)  = \frac{1}{\Cap(A,B)} \int_{\Omega \times A} 
\bigl[ h_{AB} (x) - h_{AB}(y)\bigr]_+ K(\dd x,\dd y) = 0
\ee
because $h_{AB}(x) \leq 1$ for $x \in \Omega$ and $h_{AB}(y)=1$ for $y \in A$.  A similar 
argument shows that $\Phi_{AB}(B\times \Omega) = 0$. 

\medskip\noindent
(2) By (\ref{eq:KPk})--(\ref{eq:generator}), \eqref{eq:cap-def1} and \eqref{eq:harmonic-flow}, 
we have
\be
\begin{aligned}
\Phi_{AB}(A\times \Omega) 
&= \frac{1}{\Cap(A,B)} \int_{A} \mu(\dd x) \int_\Omega \bigl[h_{AB}(x)- h_{AB}(y)] k(x,\dd y) \\
& = \frac{1}{\Cap(A,B)} \int_{A} \mu(\dd x) \bigl(-L h_{AB}\bigr) (x) = 1
\end{aligned}
\ee
because $h_{AB}(x)=1$ for $x \in A$ and $h_{AB}(y) \leq 1$ for $y \in \Omega$. The fact that
$\Phi_{AB}(\Omega \times B) = 1$ follows from the symmetry relations $h_{BA} = 1- h_{AB}$,  
$\Phi_{AB}(C\times D) = \Phi_{BA} (D \times C)$ and $\Cap(A,B) = \Cap(B,A)$. 

\medskip\noindent
(3) Let $C\subset \Omega \backslash (A\cup B)$. Because of the symmetry of $K$, we have 
\be
\begin{aligned}
\Phi(\Omega \times C) 
& = \int_{\Omega \times C} \bigl[ h_{AB}(x) - h_{AB}(y)\bigr]_+ K(\dd y,\dd x) \\
& = \int_{C\times \Omega} \bigl[ h_{AB}(x) - h_{AB}(y) \bigr]_- K(\dd x,\dd y).
\end{aligned}
\ee
Therefore, by  (\ref{eq:KPk})--(\ref{eq:generator}) and the fact that $[\cdot]_+ - [\cdot]_- = [\cdot]$, 
\be
\begin{aligned}
\Phi(C \times \Omega) - \Phi(\Omega \times C) 
&= \int_{C\times \Omega}  \bigl[h_{AB}(x) - h_{AB}(y)\bigr] K(\dd x,\dd y)\\
&= \int_C \mu(\dd x) (-L h_{AB})(x) = 0, 
\end{aligned}
\ee
because $\mu$ is invariant. Thus, Kirchhoff's law holds. 

\medskip\noindent
(4) Define 
\be
\chi = \bigl\{ (x,y) \in \Omega \times \Omega \mid h_{AB}(x) > h_{AB}(y) \bigr\}.  
\ee
Clearly, $\Phi_{AB}(\chi^\mathrm{c})=0$ and $\chi$ contains no loops. Hence $\Phi_{AB}$ 
is a loop-free unit flow from $A$ to $B$. The flow is finite because 
\be 
\Phi_{AB}(\Omega \times \Omega) 
= \frac{1}{\Cap(A,B)} \int_{\Omega \times \Omega} 
\bigl[ h_{AB}(x) - h_{AB}(y) \bigr]_+ K(\dd x,\dd y) 
\leq \frac{K(\Omega\times \Omega)}{\Cap(A,B)}  < \infty,
\ee
where we use that $h_{AB} \leq 1$ and $K(\Omega \times \Omega) <\infty$. 
\end{proof}

\begin{lemma}
\label{lem:harm}
$Y$ with law $\P^{\Phi_{AB}}$ satisfies $\tau^Y_B<\infty$ a.s., its paths are 
self-avoiding, and equality holds in \eqref{ineq:cap}. 
\end{lemma}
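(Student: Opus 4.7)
The first two assertions are immediate consequences of what we have already established. By Lemma~\ref{lem:hf}, the harmonic flow $\Phi_{AB}$ is a finite, loop-free unit flow from $A$ to $B$. Applying parts~(4) and~(3) of Proposition~\ref{prop:flow-proba} respectively yields $\tau_B^Y<\infty$ $\P^{\Phi_{AB}}$-a.s.\ and the self-avoidance of the stopped paths $(Y_0,\ldots,Y_{\tau_B^Y})$. So the only remaining task is to verify that the Berman-Konsowa bound \eqref{ineq:cap} is attained for $\Phi=\Phi_{AB}$.

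The plan for the equality is to exploit a telescoping identity. From \eqref{eq:harmonic-flow} we read off
\be
\frac{\dd \Phi_{AB}}{\dd K}(x,y) = \frac{[h_{AB}(x) - h_{AB}(y)]_+}{\Cap(A,B)},
\ee
and with $\chi = \{(x,y) \mid h_{AB}(x)>h_{AB}(y)\}$ from the proof of Lemma~\ref{lem:hf} we have $\Phi_{AB}(\chi^{\mathrm c})=0$. Consequently, $\P^{\Phi_{AB}}$-a.s.\ every transition $(Y_{n-1},Y_n)$ with $n\le \tau_B^Y$ lies in $\chi$, so that $[h_{AB}(Y_{n-1})-h_{AB}(Y_n)]_+ = h_{AB}(Y_{n-1})-h_{AB}(Y_n)$ on the entire stopped path.

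Summing the Radon-Nikodym derivatives along the path therefore telescopes:
\be
\sum_{n=1}^{\tau_B^Y} \frac{\dd \Phi_{AB}}{\dd K}(Y_{n-1},Y_n)
= \frac{1}{\Cap(A,B)} \sum_{n=1}^{\tau_B^Y} \bigl[h_{AB}(Y_{n-1}) - h_{AB}(Y_n)\bigr]
= \frac{h_{AB}(Y_0) - h_{AB}(Y_{\tau_B^Y})}{\Cap(A,B)} = \frac{1}{\Cap(A,B)},
\ee
using $Y_0 \in A$ (hence $h_{AB}(Y_0)=1$) and $Y_{\tau_B^Y} \in B$ (hence $h_{AB}(Y_{\tau_B^Y})=0$). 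Taking the reciprocal and the expectation gives $\Cap(A,B)$ on the right-hand side of \eqref{ineq:cap}, matching the upper bound already furnished by Proposition~\ref{prop:flow-proba}(2).

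The argument is essentially routine; the only point that requires slight care is ensuring the telescoping is valid, which is precisely why loop-freeness and the concentration of $\Phi_{AB}$ on the directed set $\chi$ are needed. No additional obstacle arises.
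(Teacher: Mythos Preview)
Your proof is correct, and the telescoping computation you give is exactly the one the paper uses (their equation \eqref{capharm}) to show equality in \eqref{ineq:cap}. The invocations of Lemma~\ref{lem:hf} and Proposition~\ref{prop:flow-proba}(3)--(4) for the first two claims are likewise how the paper handles them.

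One difference worth noting: the paper's proof of this lemma additionally establishes that $\Psi_{AB}=0$ in the sense of Lemma~\ref{lem:psi}, i.e., that equality holds in \eqref{ineq:edge-proba} for the harmonic flow (equivalently, $\Phi_{\P^{\Phi_{AB}}}=\Phi_{AB}$). This is done via a Poincar\'e-recurrence argument exploiting the strict decrease of $h_{AB}$ along $Y$. You correctly observe that this extra fact is not needed for the lemma \emph{as stated}, since the telescoping already computes the right-hand side of \eqref{ineq:cap} directly. However, the identity $\Phi_{\P^{\Phi_{AB}}}=\Phi_{AB}$ is used later in the proof of Theorem~\ref{thm:BK1}, where one needs the path measure $\P^{\Phi_{AB}}$ to have associated flow equal to $\Phi_{AB}$ in order to conclude that it attains the supremum in \eqref{eq:Cap1}. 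So while your argument fully proves the lemma, be aware that the paper is packing a bit more into this proof than the lemma statement advertises.
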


\begin{proof}
We proceed as in Lemma~\ref{lem:psi} and exploit the fact that $(h_{AB} (Y_n))_{n\in\N_0}$ 
is $\P^{\Phi_{AB}}$-a.s.\ strictly decreasing. Define $\Psi_{AB}$ as in Lemma~\ref{lem:psi} for $\Phi
= \Phi_{AB}$, i.e., 
\be 
\Psi_{AB}(f) = \int_{\Omega \times \Omega} f \dd \Psi_{AB} 
= \int_{\Omega \times \Omega} f \dd \Phi_{AB} 
- \E^{\Phi_{AB}} \left[ \sum_{n=1}^{\tau_B^Y} f(Y_{n-1},Y_n) \right].
\ee
We have to show that $\Psi_{AB} =0$. We already know that $\Psi_{AB}$ is a finite measure 
satisfying Kirchhoff's law in all of $\Omega$. Suppose that $\Psi_{AB}(\Omega \times \Omega)>0$. 
Let $\hat \nu(\dd x)$ be its marginal and $\hat \ell(x, \dd y)$ any probability transition kernel such 
that $\hat \nu(\dd x) \hat \ell(x,\dd y) = \Psi_{AB}(\dd x,\dd y)$. Without loss of generality we may 
assume that $\hat \nu(\Omega) =1$. Let $\hat Y = (\hat Y_n)_{n\in \N}$ be a stationary Markov 
process with probability transition kernel $\hat \ell$ and initial distribution $\P( \hat Y_0 \in C) 
= \hat \nu(C)$, $C \subset \Omega$ measurable.  A Poincar\'e-recurrence-type argument shows 
that $\hat Y$ returns to $C$ infinitely often for every $C$ with $\hat \nu(C) >0$. On the other hand, 
we know that the set $\{ (x,y) \in \Omega \times \Omega \mid h_{AB}(x) \leq h_{AB}(y) \}$ has 
$\Phi_{AB}$-measure $0$. Since $\Psi_{AB}(f) \leq \Phi_{AB}(f)$, this set also has $\Psi_{AB}$-measure 
$0$. It therefore follows that there is an $m>0$ such that 
\be
\Psi_{AB}\Bigl( \big\{ (x,y) \in \Omega \times \Omega \mid h_{AB}(x) > m > h_{AB}(y) \big\} \Bigr) >0.
\ee

Put $C = \{x\in \Omega \mid h_{AB}(x) >m\}$. Then $\hat\nu(C) = 1$ and $\Psi_{AB}(C \times C)
=0$. Therefore once $\hat Y$ has left $C$ it cannot come back to $C$, contradicting the fact 
that it returns to $C$ infinitely often. Thus, the assumption $\Psi_{AB}(\Omega \times \Omega)>0$ 
leads to a contradiction. We conclude that $\Psi_{AB}=0$, and so there is equality in 
\eqref{ineq:edge-proba}. 

To show that equality holds in \eqref{ineq:cap}, i.e., $\Phi_{AB}$ is a maximizer of \eqref{eq:Cap2}, 
we compute the right-hand side of \eqref{ineq:cap}. Note that $(\dd \Phi_{AB}/\dd K) (x,y) 
= [h_{AB} (x) - h_{AB}(y)]_+$, and recall that $\P^{\Phi_{AB}}$-a.s.\ $(h_{AB}(Y_n))_{n\in\N_0}$ is 
strictly increasing (until it reaches $B$). We have 
\be
\label{capharm}
\begin{aligned}
&\frac{1}{\Cap(A,B)}\, \E^{\Phi_{AB}} \left[ \left( \sum_{n=1}^{\tau_B^Y} 
\frac{\dd \Phi_{AB}}{\dd K}(Y_{n-1},Y_n) \right)^{-1} \right] \\
&\qquad = \E^{\Phi_{AB}} \left[ \left(\sum_{n=1}^{\tau_B^Y} 
\bigl[ h_{AB}(Y_{n-1}) - h_{AB}(Y_n) \bigr] \right)^{-1} \right] \\
&\qquad = \E^{\Phi_{AB}} \left[ \Bigl(h_{AB}(Y_0) - h_{AB}\bigl(Y_{\tau_B^Y}\bigr) \Bigr)^{-1} \right]
= \E^{\Phi_{AB}} \Bigl[ (1-0)^{-1} \Bigr] = 1, 
\end{aligned}
\ee
which is the desired result. 
\end{proof}


\subsection{Proof of Theorems \ref{thm:BK1} and \ref{thm:BK2}}
\label{ss:prooftheorems}

\begin{proof}
Theorem~\ref{thm:BK2} follows from Proposition~\ref{prop:flow-proba}(2--4) and 
Lemma~\ref{lem:harm}. To prove Theorem~\ref{thm:BK1} we argue as follows.

Let $\P \in \mathcal{P}_{AB}^K$ and put $\phi= \dd \Phi_\P/ \dd K$. Pick 
$h \in \mathcal{D}(\mathcal{E})$ such that $h|_A=1$ and $h|_B =0$. Then,
by \eqref{eq:dirichlet-form} and \eqref{eq:edge-proba},
\be
\label{Ehest1}
\begin{aligned} 
\mathcal{E}(h) 
&\geq \tfrac12 \int_{\Omega \times \Omega} \bigl[ h(y) - h(x) \bigr]^2 
[\mathbf{1}_{\{\phi(x,y) >0\}} + \mathbf{1}_{\{\phi(y,x) >0\}}]\, K(\dd x,\dd y) \\
&= \int_{\Omega \times \Omega} \bigl[ h(y) - h(x) \bigr]^2 
\mathbf{1}_{\{\phi(x,y) >0\}}\, K(\dd x,\dd y) \\
&=  \int_{\Omega \times \Omega} 
\frac{ \bigl[ h(y) - h(x) \bigr]^2}{\phi(x,y)} \mathbf{1}_{\{\phi(x,y) >0\}} \Phi_\P (\dd x,\dd y)\\
&= \E \left[ \sum_{(x,y) \in \gamma} \frac{ \bigl[ h(y) - h(x) \bigr]^2}{\phi(x,y)} 
\mathbf{1}_{\{\phi(x,y) >0\}} \right]\\
&= \E \left[ \sum_{(x,y) \in \gamma} \frac{ \bigl[ h(y) - h(x) \bigr]^2}{\phi(x,y)} \right],
\end{aligned}
\ee
where in the last line we use that $\phi(x,y)>0$ for all $(x,y) \in \gamma$ for $\P$-a.a.\ paths 
$\gamma$. The solution to the one-dimensional harmonic problem is trivial, namely, for every 
 $\gamma = (\gamma_0,\ldots,\gamma_\tau) \in \Gamma_{AB}$ we have
\be
\label{Ehest2}
\begin{aligned} 
&\inf\left\{  \sum_{(x,y) \in \gamma} \frac{ \bigl[ h(y) - h(x) \bigr]^2}{\phi(x,y)}
~\Big|~ h\colon\gamma \to [0,1],\ h(\gamma_0)=1,\ h(\gamma_\tau)=0 \right\}\\
&\qquad \geq \left( \sum_{(x,y) \in \gamma} \phi(x,y) \right)^{-1},
\end{aligned}
\ee 
where the inequality is an identity when $\gamma$ is self-avoiding. Combining 
(\ref{Ehest1})--(\ref{Ehest2}) and recalling \eqref{capdef}, we get 
\be
\label{capbd}
\Cap(A,B) \geq \E\left[\left( \sum_{(x,y) \in \gamma} 
\frac{\dd\Phi_\P}{\dd K}(x,y) \right)^{-1}\right].
\ee

Next, recall that $\P^{\Phi_{AB}}$ is the probability measure on $\Gamma_{AB}$ associated 
with the harmonic flow $\Phi_{AB}$. We know that $\P^{\Phi_{AB}}$ is a probability measure 
on finite self-avoiding paths from $A$ to $B$, and so we have
\be
\begin{aligned}
\E^{\Phi_{AB}}\left[ \left( \sum_{(x,y) \in \gamma} \frac{\dd \Phi_{AB}}{\dd K}(x,y) \right)^{-1} \right] 
&=  \E^{\Phi_{AB}} \left[ \left( \sum_{n=1}^{\tau_B^Y} \frac{\dd \Phi_{AB}}{\dd K}
(Y_{n-1},Y_n) \right)^{-1} \right]\\ 
&= \Cap(A,B),
\end{aligned}
\ee
where the first equality uses the definition of $Y$ in Proposition~\ref{prop:flow-proba} and the 
second equality uses \eqref{capharm}. Thus, equality is achieved in \eqref{capbd} for 
$\P=\P^{\Phi_{AB}}$. This completes the proof of Theorem~\ref{thm:BK1}.
\end{proof}


\subsection{Proof of Theorem~\ref{thm:BK12ext}}
\label{ss:BK12extension}

The extension of Theorems~\ref{thm:BK1} and \ref{thm:BK2} from positive recurrent
$X$ to null-recurrent and transient $X$ proceeds via a \emph{truncation argument}. 
In Appendix~\ref{ss:truncation} we let $(\Omega_n)_{n\in \N}$ be an increasing 
sequence of measurable subsets of $\Omega$ with $\cup_{n\in\N} \Omega_n = \Omega$ 
such that $A \cup B \subset \Omega_n$ and $K(\Omega_n \times \Omega_n)<\infty$ for 
all $n\in\N$. We show that if $\Cap_n(A,B)$ denotes the capacity for the reversible Markov 
jump process $X^n$ obtained from $X$ by \emph{suppressing} jumps outside $\Omega_n$, 
then
\be
\label{eq:captruncapprox}
\lim_{n\to\infty} \Cap_n(A,B) = \Cap(A,B) 
\ee  
when $K((A \cup B) \times \Omega)<\infty$. 

In order to prove the extension of Theorem~\ref{thm:BK2}, we argue as follows. Let $h_{AB}^n$ 
be the harmonic function on $\Omega_n$, and let $\Phi_{AB}^n$ be the harmonic flow on 
$\Omega_n$ given by \eqref{eq:harmonic-flow}: 
\be
\Phi_{AB}^n(\dd x,\dd y) = \frac{1}{\Cap_n(A,B)}\,\bigl[h_{AB}^n(x)-h_{AB}^n\bigr]_+
\mathbf{1}_{\Omega_n}(x)\mathrm{1}_{\Omega_n}(y)\,K(\dd x,\dd y).
\ee
Note that, for all $n\in\N$, $\Phi_{AB}^n \ll K$ and $\Phi_{AB}^n \in \mathcal{U}_{AB}^K$, where
the latter is the set of finite unit flows on $\Omega$. Therefore, by Theorem~\ref{thm:BK2}, we have
\be
\label{eq:captruncharm}
\Cap_n(A,B) = \E^{\Phi_{AB}^n}\left[ \left( \sum_{n=1}^{\tau_B^Y} 
\frac{\dd \Phi_{AB}^n}{\dd K}(Y_{n-1},Y_n) \right)^{-1} \right],
\ee
where we use that, for all $n\in\N$, $X^n$ has finite total conductance (i.e., is positive recurrent). 
Combining \eqref{eq:captruncapprox} and \eqref{eq:captruncharm}, we obtain
\be \label{eq:ctc}
\Cap(A,B)  \leq \sup_{\Phi \in \mathcal{U}_{AB}^K} 
\E^\Phi\left[ \left( \sum_{n=1}^{\tau_B^Y} \frac{\dd \Phi}{\dd K}(Y_{n-1},Y_n) \right)^{-1} \right].
\ee 
But the reverse inequality was already proved in Proposition~\ref{prop:flow-proba}(2).

In order to prove the extension of Theorem~\ref{thm:BK1}, we argue as follows. Equation \eqref{eq:Cap1} 
with $\geq$ instead of $=$ is a consequence of Proposition~\ref{prop:flow-proba}(2). For the reverse 
inequality, note that in \eqref{eq:captruncharm} we have $\Phi_{AB}^n= \Phi_{\P^{n}}$ with $\P^n
= \P^{\Phi_{AB}^n} \in \mathcal{P}_{AB}^K$. Passing to the limit $n\to\infty$, we obtain \eqref{eq:ctc} with $\Phi$ replaced by $\Phi_\P$ and the supremum over $\Phi\in \mathcal{U}_{AB}^K$  replaced by the supremum over $\P\in\mathcal{P}_{AB}^K$.


\appendix

\section{Appendix~A. Potential-theoretic ingredients}
\label{app1}

In Section~\ref{app1:jump-process} we provide the details of the construction of the Markov 
jump process $X=(X_t)_{t \geq 0}$ introduced in Section~\ref{ss:setting}. In Section~\ref{app1:Diri} 
we check that the Dirichlet principle in \eqref{capdef} has a unique solution. In Sections~\ref{app1:DT}
and \ref{ss:TPproved} we give the proof of the Dirichlet principle and the Thomson principle. In 
Section~\ref{ss:truncation} we show that the capacity can be obtained as the limit of certain 
truncated capacities. This is crucial for the extension from finite total conductances ($K(\Omega
\times\Omega)<\infty$, corresponding to $X$ with positive recurrent $Z$) to infinite total conductances 
($K(\Omega \times \Omega)=\infty$).
 
 
\subsection{Jump process} 
\label{app1:jump-process}

Recall that a \emph{kernel} on $(\Omega,\mathcal{F})$ (with $\Omega$ the state space and 
$\mathcal{F}$ the Borel $\sigma$-algebra) is a map $k\colon \Omega \times \mathcal{F} \to 
[0,\infty)$ such that $x\mapsto k(x,B)$ is measurable for every $B\in \mathcal{F}$, and $k(x,\cdot)$ 
is a measure for every $x\in \Omega$. We assume that $k(x,\Omega)<\infty$ for all $x\in \Omega$, 
and that $k(x,\dd y)$ admits a $\sigma$-finite reversible measure $\mu$, i.e., $\mu(\dd x)k(x,\dd y) 
= \mu(\dd y) k(x,\dd y)=K(\dd x,\dd y)$. Thus, $K$ is the unique measure on the product space 
$\Omega \times \Omega$ such that 
\be
K(C\times D) = \int_C \mu(\dd x) k(x,D), \qquad C,D \subset \Omega \text{ measurable}.
\ee
Set 
 \be 
\lambda(x) = k(x,\Omega)
\ee
and assume that $\lambda(x)>0$ for $\mu$-a.a.\ $x \in \Omega$. The \emph{minimal jump process} 
$X=(X_t)_{t\geq 0}$ associated with $k(x,\dd y)$ is defined as follows. Let $(Q_t(x,\dd y))_{t\geq 0}$ 
be the minimal solution of the backward Kolmogorov equation (existence and uniqueness are proven 
in Feller~\cite[Chapter 3]{feller-vol2}). Thus,  $Q_0(x,\dd y) = \delta_x(\dd y)$ is the identity kernel and, 
for all $t\geq 0$, $x\in \Omega$ and $C\subset \Omega$ measurable, 
\be 
\label{eq:kolmogorov}
\frac{\partial Q_t}{\partial t} (x,C) = - \lambda(x) Q_t(x,C) + \int_\Omega k(x,\dd y) Q_t(y,C).  
\ee
The right-hand side is written as $(L Q_t(\cdot,C))(x)$ with $L$ the generator of $X$.  The minimal 
solution satisfies $Q_t(x,\Omega) \leq 1$ for all $x\in \Omega$, $t\geq 0$ and therefore defines 
a Markov process $(X_t)_{t \geq 0}$ with a possibly finite lifetime $\zeta>0$.

Next, we specify the domains of the Dirichlet form and the generator, and check that $\mu$ is 
a reversible measure. Let $L_0$ be the operator in $L^2(\Omega,\mu)$ with domain 
\be
\label{eq:dlzero} 
\mathcal{D}(L_0) = \Bigl \{ f\in L^2(\Omega,\mu) \mid \exists\,n \in \N\colon\, 
\mu\bigl( \{x\in \Omega\mid f(x) \neq 0,\, \lambda(x) >n \}\bigr) = 0  \Bigr\}
\ee 
and $L_0 f = L f$ as in \eqref{eq:generator}.  Let $\mathcal{E}^*$ be the quadratic form with 
domain 
\be 
\mathcal{D}(\mathcal{E}^*)  = \Bigl \{ f \in L^2(\Omega,\mu) \mid 
\int_{\Omega\times\Omega} \bigl[ f(y) - f(x) \bigr]^2 K(\dd x,\dd y) <\infty \Bigr \}
\ee
and $\mathcal{E}^*(f) = \mathcal{E}(f)$ as in \eqref{eq:dirichlet-form}. Set  
\be 
\label{eq:norm-one}  
\|f\|_1 = \Bigl( \int_\Omega f^2 \dd \mu + \mathcal{E}(f) \Bigr)^{1/2}.  
\ee 
Note that $\mathcal{D}(L_0) \subset \mathcal{D}(\mathcal{E})$. Indeed, if $f\in L^2(\Omega,\mu)$ is supported 
in $\{x \in \Omega \mid \lambda(x) \leq n\}$, then 
\be
\int_{\Omega \times \Omega} \bigl[ f(y) - f(x) \bigr]^2 K(\dd x,\dd y) 
\leq  4 \int_{\Omega} f(x)^2 \lambda(x) \mu(\dd x) \leq 4 n \int_\Omega f(x)^2\mu(\dd x)<\infty
\ee
by the inequality $(a-b)^2\leq 2 (a^2 + b^2)$ and the symmetry of $K$. Let $\mathcal{D}(\mathcal{E})
\subset \mathcal{D}(\mathcal{E}^*)$ be the closure of $\mathcal{D}(L_0)$ with respect to $\|\cdot\|_1$. 

For explosive processes the operator $L_0$ can have several self-adjoint extensions. The next 
proposition says that the generator $L$ of the minimal jump process $X$ is the \emph{Friedrichs extension} 
of $L_0$. For discrete state spaces, this result was shown by Silverstein~\cite{silverstein1}, \cite{silverstein2}. 

\begin{prop}{\rm (Chen~\cite[Theorem 3.6]{chen})} 
\label{prop:dirichlet}
The following hold:\\ 
{\rm (1)} $\mathcal{E}$ with domain $\mathcal{D}(\mathcal{E})$ is a  closed quadratic form in 
$L^2(\Omega,\mu)$.\\  
{\rm (2)} There is a  unique self-adjoint operator $L$ with domain $\mathcal{D}(L) \subset \mathcal{D}
(\mathcal{E})$ such that $\mathcal{E}(f) = \int_\Omega f (- L f) \dd \mu$ for all $f\in \mathcal{D}(L)$. 
The operator $L$ is an extension of $L_0$.\\ 		
{\rm (3)} Let $(Q_t)_{t \geq 0}$ be the minimal solution of the backward Kolmogorov equation in 
\eqref{eq:kolmogorov}. For all $t\geq 0$ and all $f\in L^2(\Omega,\mu)$, 
\be
(Q_t f)(x) =  \bigl( e^{t L} f\bigr)(x) \quad \mu\text{-a.e.}
\ee
\end{prop}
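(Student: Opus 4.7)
My plan proceeds through the three claims in order, leveraging the correspondence between closed non-negative symmetric quadratic forms and non-positive self-adjoint operators on a Hilbert space (Kato's representation theorem), and then linking the resulting $L^2$-semigroup to the probabilistically defined minimal semigroup $(Q_t)$ via a truncation argument.

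For (1) and (2), I would first establish closedness of the ambient form $\mathcal{E}^*$ on $\mathcal{D}(\mathcal{E}^*)$: given $f_n \to f$ in $L^2(\Omega,\mu)$ with $\sup_n \mathcal{E}^*(f_n) < \infty$, pass to an almost-everywhere convergent subsequence and apply Fatou's lemma on $\Omega \times \Omega$ with respect to the symmetric measure $K$. Since $\mathcal{D}(\mathcal{E})$ is by construction the $\|\cdot\|_1$-closure of $\mathcal{D}(L_0)$ inside $\mathcal{D}(\mathcal{E}^*)$, it is a complete subspace, so $\mathcal{E}$ is closed. Kato's representation theorem then produces a unique non-positive self-adjoint operator $L$ with $\mathcal{D}(L) \subset \mathcal{D}(\mathcal{E})$ and $\mathcal{E}(f,g) = \int_\Omega f\,(-Lg)\,\dd\mu$. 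To see that $L$ extends $L_0$, fix $h \in \mathcal{D}(L_0)$, supported on $\{\lambda \leq n\}$ for some $n$: a direct computation using Fubini and the symmetry of $K$ yields $\mathcal{E}(h,g) = \int_\Omega g\,(-L_0 h)\,\dd\mu$ for every $g \in \mathcal{D}(L_0)$; density extends this identity to all $g \in \mathcal{D}(\mathcal{E})$, and the representation theorem then forces $h \in \mathcal{D}(L)$ with $Lh = L_0 h$.

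For (3), the plan is to verify via Hille--Yosida that $(Q_t)$ extends to a strongly continuous sub-Markovian symmetric semigroup on $L^2(\Omega,\mu)$---symmetry from the reversibility of $\mu$, sub-Markovianity from $Q_t(x,\Omega) \leq 1$, strong continuity from continuity of $t \mapsto Q_t(x,C)$ at $t=0$ via dominated convergence---and then identify its $L^2$-generator $\hat L$ with $L$. Differentiating the backward Kolmogorov equation \eqref{eq:kolmogorov} at $t=0$ along $h \in \mathcal{D}(L_0)$ shows $\hat L h = L_0 h$, so $\hat L$ is a non-positive self-adjoint extension of $L_0$. To conclude $\hat L = L$, I would show that $\mathcal{D}(L_0)$ is a form-core for $\hat L$ via a truncation argument: approximate $f \in \mathcal{D}(\hat L)$ by $f_n = f\mathbf{1}_{\{\lambda \leq n\}}$ and verify, using reversibility together with the fact that $(Q_t)$ is the monotone limit of semigroups associated with truncated rates, that $\|f_n - f\|_1 \to 0$. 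Once $\hat L$ and $L$ agree on a common form-core, they coincide, yielding $Q_t = e^{tL}$.

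The main obstacle is part (3). In the explosive regime $L_0$ admits multiple non-positive self-adjoint extensions, each corresponding to a different honest Markov process with a different boundary behaviour at infinity. The subtle point is that the specific choice of $\mathcal{D}(L_0)$ in \eqref{eq:dlzero}---functions vanishing where the jump rate exceeds some threshold---precisely matches the truncation implicit in the probabilistic construction of $(Q_t)$ as the minimal solution of \eqref{eq:kolmogorov}. This alignment singles out the Friedrichs-type extension as the generator of $(Q_t)$ among all possible self-adjoint extensions, and making this correspondence explicit (so that the analytic form closure and the probabilistic monotone limit end up on the same side of the Markov uniqueness question) is the crux of the argument.
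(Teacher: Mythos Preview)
The paper does not prove this proposition at all: it is stated with attribution to Chen~\cite[Theorem 3.6]{chen} and no proof is given. So there is no ``paper's own proof'' to compare your proposal against.

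On the merits of your sketch: parts (1) and (2) are the standard route (lower semicontinuity of $\mathcal{E}^*$ via Fatou, closure of $\mathcal{D}(L_0)$, Kato's representation theorem) and are fine. For part (3) you correctly identify the crux --- among the several non-positive self-adjoint extensions of $L_0$, one must show that the generator $\hat L$ of the minimal semigroup is precisely the Friedrichs extension $L$ --- but the truncation step you propose is circular as written. You want to show that $\mathcal{D}(L_0)$ is a form-core for $\hat L$ by taking $f \in \mathcal{D}(\hat L)$ and checking $\|f - f\mathbf{1}_{\{\lambda \le n\}}\|_1 \to 0$; however, the norm $\|\cdot\|_1$ here must be the form norm of $\hat L$, about which you have no a priori information beyond $\hat L \supset L_0$. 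Verifying $\hat{\mathcal{E}}(f - f_n) \to 0$ essentially presupposes that the form of $\hat L$ is $\mathcal{E}$, which is what you are trying to prove.

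The argument that actually works (and is what underlies Chen's proof) runs in the opposite direction: one uses that the minimal solution $(Q_t)$ is the monotone increasing limit of the semigroups $(Q_t^{(n)})$ with bounded truncated rates $k_n(x,\dd y) = \mathbf{1}_{\{\lambda \le n\}}(x)\,k(x,\dd y)$, each of which has a bounded self-adjoint generator with Dirichlet form $\mathcal{E}_n(f) = \tfrac12 \int_{\{\lambda \le n\}\times\Omega} [f(x)-f(y)]^2 K(\dd x,\dd y)$. The monotone-convergence theorem for non-negative quadratic forms (Kato, or Simon~\cite{simon}) then identifies the form of the limiting semigroup as the closure of $\sup_n \mathcal{E}_n$ on its natural domain, and one checks directly that this closure is $(\mathcal{E},\mathcal{D}(\mathcal{E}))$. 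The key input is thus an analytic monotone-limit theorem for forms, not a density argument for a single $f$.
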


\noindent
Part (3) shows that $(Q_t)_{t\geq 0}$ is  self-adjoint in $L^2(\Omega,\mu)$:

\begin{cor}
The measure $\mu$ is a reversible measure for the minimal jump process $X$.  
\end{cor}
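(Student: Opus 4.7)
The plan is to deduce reversibility of $\mu$ for $X$ from the self-adjointness of the semigroup $(Q_t)_{t\geq 0}$ in $L^2(\Omega,\mu)$, which in turn follows from parts (2) and (3) of Proposition~\ref{prop:dirichlet}. Recall that reversibility of $\mu$ for $X$ means that the measure $\mu(\dd x) Q_t(x,\dd y)$ on $\Omega \times \Omega$ is symmetric under $(x,y) \mapsto (y,x)$ for every $t \geq 0$; this is the natural semigroup-level extension of the detailed-balance relation~\eqref{reversible} imposed on the jump rates.

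First I would invoke Proposition~\ref{prop:dirichlet}(2): the generator $L$ is self-adjoint on $L^2(\Omega,\mu)$. By the Borel functional calculus for (unbounded) self-adjoint operators, $e^{tL}$ is then a bounded self-adjoint operator on $L^2(\Omega,\mu)$ for every $t \geq 0$. Proposition~\ref{prop:dirichlet}(3) identifies $Q_t f$ with $e^{tL} f$ $\mu$-a.e.\ for every $f \in L^2(\Omega,\mu)$, so $Q_t$, viewed as an operator on $L^2(\Omega,\mu)$, is self-adjoint. Concretely, for all $f,g \in L^2(\Omega,\mu)$ and all $t \geq 0$,
\be
\int_\Omega f(x) (Q_t g)(x)\,\mu(\dd x) = \int_\Omega (Q_t f)(x) g(x)\,\mu(\dd x).
\ee

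The remaining step is to upgrade this $L^2$-identity to symmetry at the level of the measure $\mu(\dd x) Q_t(x,\dd y)$. Since $\mu$ is $\sigma$-finite, we can choose an increasing exhaustion $(\Omega_k)_{k\in\N}$ with $\mu(\Omega_k) < \infty$ and $\cup_k \Omega_k = \Omega$. For measurable $C \subset \Omega_j$ and $D \subset \Omega_k$ the indicators $\mathbf{1}_C, \mathbf{1}_D$ lie in $L^2(\Omega,\mu)$, and the displayed identity with $f=\mathbf{1}_C$, $g=\mathbf{1}_D$ reads
\be
\int_C Q_t(x,D)\,\mu(\dd x) = \int_D Q_t(y,C)\,\mu(\dd y).
\ee
Since the rectangles of the form $C \times D$ with $C,D$ of finite $\mu$-measure form a $\pi$-system generating $\mathcal{F} \otimes \mathcal{F}$, a monotone class argument yields $\mu(\dd x) Q_t(x,\dd y) = \mu(\dd y) Q_t(y,\dd x)$ as measures on $\Omega \times \Omega$, which is exactly the asserted reversibility.

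I do not expect a substantial obstacle here: the entire content lies in Proposition~\ref{prop:dirichlet}, and the only care needed is the harmless bookkeeping around $\sigma$-finiteness of $\mu$ when passing from $L^2$-functions to indicators.
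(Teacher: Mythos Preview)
Your proposal is correct and follows the same approach as the paper: the paper simply remarks (immediately before stating the corollary) that ``Part (3) shows that $(Q_t)_{t\geq 0}$ is self-adjoint in $L^2(\Omega,\mu)$'' and leaves the rest implicit. Your write-up just fills in the routine passage from self-adjointness of $e^{tL}$ to symmetry of $\mu(\dd x) Q_t(x,\dd y)$ via indicators and a monotone-class argument, which is exactly the intended unpacking.
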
 

Finally, we cite the two conditions for non-explosion that we will need later. 

\begin{prop}{\rm (Chen~\cite[Corollary 3.7]{chen})}
\label{prop:non-explosion1}
The following are equivalent:\\ 
{\rm (1)} $\mathcal{D}(\mathcal{E}) =  \mathcal{D}(\mathcal{E}^*)$, i.e., $\mathcal{D}(L_0)$ is 
dense in $\mathcal{D}(\mathcal{E}^*)$ with respect to $\|\cdot\|_1$.\\
{\rm (2)} $L_0$ has no self-adjoint extensions other than the operator $L$ defined in 
Proposition~{\rm \ref{prop:dirichlet}}.\\ 
{\rm (3)} $X$ is non-explosive: $\P_x(\zeta <\infty) = 0$ for $\mu$-a.a.\ $x\in \Omega$, with 
$\zeta$ denoting the lifetime.  
\end{prop}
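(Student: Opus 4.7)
The plan is to establish the three equivalences by first extracting the purely operator-theoretic equivalence (1) $\Leftrightarrow$ (2) from Friedrichs extension theory, and then linking both to the probabilistic non-explosion statement (3) through the semigroup $(Q_t)_{t\geq 0}$ identified in Proposition~\ref{prop:dirichlet}(3).

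For (1) $\Leftrightarrow$ (2), I would start from the observation that $\mathcal{E}$ on $\mathcal{D}(\mathcal{E})$ is the $\|\cdot\|_1$-closure of the quadratic form of $L_0$ and that the self-adjoint operator $L$ supplied by Proposition~\ref{prop:dirichlet} is the Friedrichs extension of $L_0$. Any other self-adjoint extension $L'$ of $L_0$ gives rise to a Dirichlet form $\mathcal{E}'$ sandwiched as $\mathcal{E}\subset\mathcal{E}'\subset \mathcal{E}^*$, since $\mathcal{E}^*$ is the maximal closable symmetric extension on $L^2(\Omega,\mu)$ compatible with the jump structure. If $\mathcal{D}(\mathcal{E})=\mathcal{D}(\mathcal{E}^*)$, the sandwich collapses and $L'=L$. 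Conversely, if the inclusion $\mathcal{D}(\mathcal{E})\subset\mathcal{D}(\mathcal{E}^*)$ is proper, one constructs a distinct self-adjoint extension of Krein--von Neumann type by closing $L_0$ on the form $\mathcal{E}^*$, yielding a generator different from $L$.

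For (2) $\Leftrightarrow$ (3), I would exploit Proposition~\ref{prop:dirichlet}(3): the minimal semigroup equals $e^{tL}$ with $L$ the Friedrichs extension. Non-explosion is equivalent to conservation $Q_t 1 = 1$ $\mu$-a.e.\ for every $t\geq 0$, and classically this holds iff for some (equivalently all) $\alpha>0$ there is no non-trivial bounded $u\geq 0$ satisfying the maximal equation $(\alpha - L^{\max})u = 0$. If such a $u$ exists, I would promote it to a second sub-Markovian semigroup via a Feller--Reuter-type construction, thereby producing a self-adjoint extension distinct from $L$; conversely, if $u$ is forced to vanish, every self-adjoint extension $L'$ has $e^{tL'}=e^{tL}$ on a dense set, hence $L'=L$. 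This is the standard Reuter non-explosion criterion recast in the reversible setting.

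The hard step will be closing the loop (3) $\Rightarrow$ (1), that is, converting the probabilistic statement $\mathbb{P}_x(\zeta<\infty)=0$ $\mu$-a.e.\ into the form-theoretic density of $\mathcal{D}(L_0)$ in $\mathcal{D}(\mathcal{E}^*)$ with respect to $\|\cdot\|_1$. The natural device is the resolvent $G_\alpha = (\alpha - L)^{-1}$: for $f\in\mathcal{D}(\mathcal{E}^*)$, I would construct approximants by truncating on $\{\lambda\leq n\}$ and smoothing via $\alpha G_\alpha$, then show $\|\cdot\|_1$-convergence. The obstruction in the explosive case is a boundary term at infinity, and the content of the argument is that non-explosion, via $\alpha G_\alpha 1 \to 1$, kills this term. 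I would follow the scheme of Chen and of Fukushima--Oshima--Takeda, taking care that $\mu$ is only $\sigma$-finite and $\lambda$ need not be bounded, so that the cut-off $\{\lambda\leq n\}$ may have infinite $\mu$-measure and the approximation must be performed locally in form rather than in $L^2$.
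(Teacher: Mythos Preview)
The paper does not prove this proposition: it is quoted verbatim as Corollary~3.7 of Chen~\cite{chen}, with no argument supplied. So there is nothing to compare your proposal against in the paper itself; what you have written is an outline of how Chen's result might be proved, not a reconstruction of something the authors did.

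As a sketch your outline is broadly in the right spirit, but two places deserve scrutiny before it can be called a proof. First, in the step (1)~$\Leftrightarrow$~(2) you assert that every self-adjoint extension $L'$ of $L_0$ has a form domain sandwiched between $\mathcal{D}(\mathcal{E})$ and $\mathcal{D}(\mathcal{E}^*)$. The lower inclusion is the content of Friedrichs' theorem (the Friedrichs extension has the \emph{smallest} form domain among lower-bounded self-adjoint extensions), but the upper inclusion --- that the form of any self-adjoint extension of $L_0$ is dominated by $\mathcal{E}^*$ --- is exactly the statement that $\mathcal{E}^*$ is the maximal closed extension of the form of $L_0$, and this is a non-trivial structural fact about jump-type forms that you are assuming rather than proving. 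Second, in (3)~$\Rightarrow$~(2) you propose to manufacture a second \emph{symmetric} sub-Markovian semigroup from a non-trivial bounded solution of $(\alpha - L^{\max})u=0$; the Feller--Reuter construction in general produces a conservative semigroup, but symmetry with respect to $\mu$ (needed to get a self-adjoint extension) is an additional property that has to be checked and is where the reversibility hypothesis on $k$ actually enters. Your sketch of (3)~$\Rightarrow$~(1) via resolvent smoothing is the standard route and is where most of the technical work in Chen's paper resides.
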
 

\begin{prop}	{\rm (Chen~\cite[Corollary 3.8]{chen})} 
\label{prop:non-explosion2}
If $K(\Omega \times \Omega) = \int_{\Omega} \lambda(x) \mu(\dd x) <\infty$, then $X$ is 
non-explosive in the sense of Proposition~{\rm \ref{prop:non-explosion1}}. 
\end{prop}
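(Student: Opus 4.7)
The plan is to verify condition (1) of Proposition~\ref{prop:non-explosion1}, namely that $\mathcal{D}(L_0)$ is dense in $\mathcal{D}(\mathcal{E}^*)$ with respect to $\|\cdot\|_1$. The hypothesis $K(\Omega\times\Omega)=\int_\Omega\lambda\,\dd\mu<\infty$ gives $\lambda\in L^1(\Omega,\mu)$ and in particular $\lambda(x)<\infty$ for $\mu$-a.a.\ $x$; this finiteness is the key structural input driving the approximation.

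I would approximate an arbitrary $f\in\mathcal{D}(\mathcal{E}^*)$ by elements of $\mathcal{D}(L_0)$ in two stages. \emph{Stage 1 (value truncation).} Set $f^{(N)}=(-N)\vee(f\wedge N)$ and note that $\varphi_N(t)=(-N)\vee(t\wedge N)$ is a $1$-Lipschitz normal contraction, so by the triangle inequality
\[
\bigl|(f-f^{(N)})(y)-(f-f^{(N)})(x)\bigr|\leq \bigl|f(y)-f(x)\bigr|+\bigl|\varphi_N(f(y))-\varphi_N(f(x))\bigr|\leq 2\bigl|f(y)-f(x)\bigr|.
\]
Since $f^{(N)}\to f$ pointwise and $[f(y)-f(x)]^2$ is $K$-integrable by definition of $\mathcal{D}(\mathcal{E}^*)$, dominated convergence on $\Omega\times\Omega$ with dominator $4[f(y)-f(x)]^2$ yields $\mathcal{E}^*(f-f^{(N)})\to 0$; standard DCT on $\Omega$ gives $\|f-f^{(N)}\|_{L^2}\to 0$. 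Hence $f^{(N)}\to f$ in $\|\cdot\|_1$.

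\emph{Stage 2 (support truncation).} For a bounded $g\in\mathcal{D}(\mathcal{E}^*)$ with $\|g\|_\infty\leq N$, set $g_n=g\,\mathbf{1}_{\{\lambda\leq n\}}$; then $g_n\in\mathcal{D}(L_0)$ by definition~\eqref{eq:dlzero}. Using the crude bound $[h(y)-h(x)]^2\leq 2(h(x)^2+h(y)^2)$ and the symmetry of $K$,
\[
\mathcal{E}(g-g_n)\leq 2\int_\Omega g(x)^2\,\mathbf{1}_{\{\lambda(x)>n\}}\,\lambda(x)\,\mu(\dd x)\leq 2N^2\int_{\{\lambda>n\}}\lambda\,\dd\mu,
\]
which tends to $0$ by DCT since $\lambda\in L^1(\mu)$; likewise $\|g-g_n\|_{L^2}\to 0$. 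Concatenating the two stages produces, for every $f\in\mathcal{D}(\mathcal{E}^*)$, a sequence in $\mathcal{D}(L_0)$ converging to $f$ in $\|\cdot\|_1$, which is the required density.

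The main obstacle is Stage~1: the crude bound used in Stage~2 cannot be applied to $f-f^{(N)}$ directly, because for unbounded $f\in\mathcal{D}(\mathcal{E}^*)$ we have no control on $\int f^2\,\lambda\,\dd\mu$ (only $f\in L^2(\mu)$ and $\lambda\in L^1(\mu)$, which does not imply $f^2\lambda\in L^1$). The Markov/contraction inequality displayed above is exactly what is needed, since it produces a $K$-integrable pointwise majorant on $\Omega\times\Omega$ without any $\lambda$-weighted integrability of $f$; once that is in place, the two dominated convergence passages are routine.
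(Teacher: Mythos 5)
Your argument is correct, and there is in fact no internal proof to compare it against: the paper states this proposition as a quotation of Chen~\cite[Corollary 3.8]{chen}, just as Proposition~\ref{prop:non-explosion1} is quoted from Chen's Corollary 3.7. What you supply is a self-contained verification of condition (1) of Proposition~\ref{prop:non-explosion1}, which is the natural (and essentially standard) route: value truncation by the normal contraction $\varphi_N$, followed by support truncation onto $\{\lambda\le n\}$, which indeed lands in $\mathcal{D}(L_0)$ by the definition \eqref{eq:dlzero}. Both dominated-convergence passages are justified as you state; the only point worth making explicit in Stage~1 is that the pointwise convergence $(f-f^{(N)})(y)-(f-f^{(N)})(x)\to 0$ holds $K$-a.e.\ because both marginals of $K$ equal $\lambda\,\dd\mu\ll\mu$, so $f$ is finite $K$-a.e.\ in each coordinate, and the dominator $4[f(y)-f(x)]^2$ is $K$-integrable precisely because $f\in\mathcal{D}(\mathcal{E}^*)$. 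Stage~2 is where the hypothesis $K(\Omega\times\Omega)=\int_\Omega\lambda\,\dd\mu<\infty$ enters, exactly through the bound $2N^2\int_{\{\lambda>n\}}\lambda\,\dd\mu\to0$; concatenating the two stages via the triangle inequality for $\|\cdot\|_1$ (using that $\mathcal{E}^{1/2}$ is a seminorm) gives the density, and non-explosion then follows from the quoted equivalence. Compared with the paper's bare citation, your proof has the merit of exhibiting concretely why finiteness of the total conductance (equivalently, $\lambda\in L^1(\mu)$, i.e.\ positive recurrence of the jump chain) forces $\mathcal{D}(\mathcal{E})=\mathcal{D}(\mathcal{E}^*)$ and hence uniqueness of the self-adjoint extension.
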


The non-explosion criterion of Proposition~\ref{prop:non-explosion2} amounts to \emph{positive 
recurrence} of the underlying jump chain, which will appear in the analysis of the Dirichlet problem 
given below. Indeed, let $Z=(Z_n)_{n\in\N_0}$ be the \emph{jump chain} associated with $X$, 
i.e., $Z_0=X_0$ and $Z_n=X_{t_n+}$ with $t_n+$ the time right after the (random) time $t_n$ 
of the $n$-th jump of $X$. If $\lambda(y)=0$ for some $y$ then it is possible that $X$ makes 
only finitely a finite number $N$ of jumps, in which case we set $Z_m= Z_N$, $m \geq N$. 
With this convention $Z$ is a Markov chain with probability transition kernel
\be
p(x,\dd y) = \begin{cases}
\lambda(x)^{-1} k(x,\dd y),
&\quad \lambda(x)>0,\\
\delta_x(\dd y),
& \quad\lambda(x)=0, 
\end{cases}  
\ee
We have $k(x,\dd y) = \lambda(x) p(x,\dd y)$, and so $\nu(\dd x) = \lambda(x) \mu(\dd x)$ is a 
reversible measure for the jump chain $Z$. The condition $K(\Omega \times\Omega)<\infty$ 
holds if and only $\nu$ has finite mass, which for irreducible Markov chains on discrete state 
spaces is equivalent to positive recurrence (Stroock~\cite{stroock}). More generally, if 
$K(\Omega\times\Omega)<\infty$, then a Poincar{\'e}-recurrence-type argument shows $Z$,
and hence also $X$, visits every set of positive measure infinitely often. 


\subsection{Dirichlet problem} 
\label{app1:Diri}
 
We say that $f\colon\,\Omega \to \R$ solves the Dirichlet problem when 
\be
\label{Diriproblem}
f =1 \text{ on } A , \quad f = 0 \text{ on } B, \quad - Lf = 0 \text{ on } \Omega \backslash (A\cup B).
\ee 
We recall the definition of the hitting time $\tau_A = \inf\{t>0 \mid X_t \in A\}$. Let $Z=(Z_n)_{n\in \N_0}$ 
(with the convention $\inf \emptyset = \infty$) be the jump chain associated with $X=(X_t)_{t\geq 0}$, 
and set $\sigma_C=\inf \{n\in \N_0 \mid Z_n \in C\}$ for $C\subset \Omega$. 
 
\begin{prop} 
\label{prop:dirichlet-problem} 
Let $A,B\subset \Omega$ be disjoint measurable sets. Then 
\be 
g_{AB}(x) = \P_x(\tau_A<\zeta,\,\tau_A<\tau_B)
\ee
is the minimal non-negative solution of the Dirichlet problem. If  $\P_x(\tau_{A\cup B} <\infty) = 1$ for 
$\mu$-a.a.\ $x\in \Omega$, then $g_{AB}$ is the unique bounded solution of the Dirichlet problem (up 
to $\mu$-null sets). 
\end{prop}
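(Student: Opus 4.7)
My plan is to translate the Dirichlet problem for the continuous-time process $X$ into a classical harmonic-function problem for the jump chain $Z$, and then apply standard martingale arguments. Writing $k(x,\dd y) = \lambda(x) p(x,\dd y)$, the equation $-Lf(x) = 0$ at a point with $\lambda(x) > 0$ is equivalent to $f(x) = \int_\Omega f(y)\, p(x,\dd y)$, while at absorbing states $\lambda(x) = 0$ it holds vacuously. Since $X$ and $Z$ visit the same states in the same order, and $Z$ is extended by the convention $Z_m = Z_N$ past any absorbing state, one checks $\{\tau_A<\zeta\} = \{\sigma_A < \infty\}$ and, on this event, $\{\tau_A < \tau_B\} = \{\sigma_A < \sigma_B\}$. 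Hence $g_{AB}(x) = \P_x(\sigma_A < \sigma_B,\, \sigma_A < \infty)$ for $x \notin A\cup B$, with the natural extension $g_{AB}|_A \equiv 1$ and $g_{AB}|_B \equiv 0$.

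To see that $g_{AB}$ solves the Dirichlet problem, the boundary conditions are immediate, and for $x \in \Omega\setminus(A\cup B)$ with $\lambda(x) > 0$ a one-step conditioning on $Z_1$ combined with the strong Markov property of $Z$ gives $g_{AB}(x) = \int_\Omega g_{AB}(y)\,p(x,\dd y)$, equivalent to $-Lg_{AB}(x) = 0$.

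For minimality, let $f \geq 0$ be any solution and set $M_n := f(Z_{n\wedge \sigma_{A\cup B}})$ under $\P_x$. The $p$-harmonicity of $f$ on $\Omega\setminus(A\cup B)$, together with the boundary values and the absorbing-state convention, makes $(M_n)_{n\in\N_0}$ a non-negative $\P_x$-martingale for the natural filtration of $Z$. On $\{\sigma_A<\sigma_B,\,\sigma_A<\infty\}$ we have $\lim_n M_n = 1$; on $\{\sigma_B<\sigma_A,\,\sigma_B<\infty\}$ we have $\lim_n M_n = 0$; and on the remaining event $M_n \geq 0$. Fatou's lemma then yields
\begin{equation*}
f(x) = \E_x[M_0] \geq \E_x\bigl[ \liminf_{n\to\infty} M_n \bigr] \geq \P_x(\sigma_A < \sigma_B,\, \sigma_A < \infty) = g_{AB}(x).
\end{equation*}

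For uniqueness under $\P_x(\tau_{A\cup B} < \infty) = 1$ for $\mu$-a.a.\ $x$, the assumption transfers to $\P_x(\sigma_{A\cup B} < \infty) = 1$ for $\mu$-a.a.\ $x$. If $f$ is a bounded solution, then $(M_n)$ is a bounded martingale converging $\P_x$-a.s.\ to $f(Z_{\sigma_{A\cup B}}) = \mathbf{1}_{\{\sigma_A<\sigma_B\}}$, and dominated convergence yields $f(x) = \P_x(\sigma_A<\sigma_B) = g_{AB}(x)$ for $\mu$-a.e.\ $x$. I expect the main subtlety to lie in the reduction step: carefully justifying the equality of events $\{\tau_A<\zeta\}$ and $\{\sigma_A<\infty\}$, in particular treating the case where $X$ explodes in finite time as well as the case where $X$ is absorbed at some state with $\lambda=0$.
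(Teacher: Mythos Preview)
Your proof is correct and follows the same overall strategy as the paper: reduce to the jump chain $Z$ via the identification $\{\tau_A<\zeta\}=\{\sigma_A<\infty\}$, verify by one-step conditioning that $g_{AB}$ is $p$-harmonic on $(A\cup B)^c$, and then establish minimality and uniqueness. The only difference lies in how this last step is packaged. The paper proceeds by explicit iteration: for any non-negative solution $h$ it shows by induction on $n$ that
\[
h(x)=\sum_{k=0}^n\P_x(\sigma_A=k,\,\sigma_B\ge k+1)+r_n(x),
\qquad
r_n(x)=\int_{[(A\cup B)^c]^n\times\Omega} p(x,\dd y_1)\cdots p(y_n,\dd y_{n+1})\,h(y_{n+1}),
\]
drops the non-negative remainder, and lets $n\to\infty$; for bounded $h$ the remainder is dominated by $\|h\|_\infty\,\P_x(\sigma_{A\cup B}>n)$. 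You instead recognise the same iteration as the martingale property of $M_n=f(Z_{n\wedge\sigma_{A\cup B}})$ and invoke Fatou's lemma for minimality and dominated convergence for uniqueness. The two arguments are equivalent in content; your martingale formulation is slightly more conceptual, while the paper's direct expansion is self-contained and avoids appealing to martingale theory.
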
 

\begin{proof}
First, note that $g_{AB}(x) = 1$ for all $x\in A$ and $g_{AB}(x)=0$ for $x\in B$. This is because the jump 
rates $\lambda(x)$ are finite, so that $\P_x$-a.s.\ there is an $\eps>0$ such that $X_t =x$ for all $t \in 
[0,\eps)$. Next, note that $g_{AB}$ can be written in terms of the jump chain as  
\be
g_{AB}(x) = \P_x( \sigma_A <\infty,\, \sigma_A<\sigma_B).  
\ee
The reader may check that $\{\tau_A<\zeta \} = \{\sigma_A < \infty\}$: if $X$ explodes before hitting $A$ 
or has infinite life-time and never hits $A$, then $Z$ makes infinitely many jumps without hitting $A$, 
and vice-versa. For $x\in \Omega \backslash (A\cup B)$,
\be 
g_{AB}(x) = \P_x( Z_1 \in A) + \int_{\Omega\backslash A} p(x, \dd y)\, 
\P_y(\sigma_A <\infty,\, \sigma_A<\sigma_B ) = \bigl( p g_{AB}\bigr) (x).
\ee
Multiplying with $\lambda(x)$, we get 
\be
\begin{aligned}
0 &= \lambda(x) \bigl[g_{AB}(x)-(pg_{AB})(x)\bigr] 
= \int_\Omega \lambda(x) \bigl[g_{AB}(x)-g_{AB}(y)]\,p(x,\dd y) \\
&= \int_\Omega \bigl[g_{AB}(x)-g_{AB}(y)]\,k(x,\dd y)
= (-L g_{AB})(x) = 0, \qquad  x \in \Omega \backslash (A\cup B).
\end{aligned}
\ee

The proof that $g_{AB}$ is the minimal solution is analogous to the proof for discrete state spaces (see 
e.g.\ Norris~\cite[Chapter 4.2]{norris}). Indeed, let $h \geq 0$ be another solution of the Dirichlet problem 
and $x\in \Omega \backslash (A\cup B)$. Induction on $n$ shows that for all $n\in\N_0$, 
\be 
\label{eq:minimal-solution} 
h(x) = \sum_{k=0}^n \P_x(\sigma_A =k, \sigma_B\geq k+1) + r_n(x)
\ee
with
\be 
r_n(x) =  \int_{ [(A\cup B)^\mathrm{c}]^n \times \Omega} 
p(x,\dd y_1) p(y_1,\dd y_2) \cdots p(y_n,\dd y_{n+1}) h(y_{n+1}).
\ee
We estimate $r_n(x) \geq 0$, let $n\to \infty$, and obtain $h(x) \geq g_{AB}(x)$. Hence $g_{AB}$ is the 
minimal non-negative solution. 

Suppose that $\P_x(\sigma_{A\cup B} <\infty ) = \infty$, and let $h$ be a bounded solution of the 
Dirichlet problem. Then \eqref{eq:minimal-solution} holds and 
\be
|r_n(x)| \leq \|h(x)\|_\infty \P_x\bigl( \sigma_{A\cup B} \geq n+1\bigr),
\ee
which tends to zero as $n\to\infty$. It follows that $h(x) = g_{AB}(x)$. 
\end{proof}

Proposition~\ref{prop:dirichlet-problem} shows that if $K(\Omega\times \Omega)<\infty$ and $X$ is 
irreducible, then $g_{AB}$ is the unique bounded solution of the Dirichlet problem. For transient $X$, 
the solution is \emph{not} unique: for example, new solutions are obtained by adding multiples of the 
function $x\mapsto 1-\P_x( \tau_{A\cup B} < \zeta)$ (see also \eqref{eq:gab} below). 

For non-explosive recurrent $X$, $g_{AB}$ is exactly the function $h_{AB}$ defined in \eqref{eq:habdef}, 
and is equal to the unique minimizer in the Dirichlet principle. For transient $X$, $g_{AB}$ is no longer 
the minimizer (see Lemma~\ref{lem:harmonic-function} below).


\subsection{Dirichlet principle}
\label{app1:DT}

In this section we prove the Dirichlet principle. For $K(\Omega \times \Omega)<\infty$, i.e., positive 
recurrent $X$, the proof is simple and analogous to proofs for finite state spaces.  A key role is played 
by the Green identity 
\be \label{eq:green} 
\begin{aligned} 
&\tfrac12 \int_{\Omega \times \Omega} \bigl[ f(x) - f(y) \bigr] \bigl[ g(x) - g(y) \bigr] K(\dd x,\dd y)  \\
& \qquad =  \int_\Omega f(x) \bigl[ g(x) - g(y) \bigr] \mu(\dd x) k(x,\dd y) \\
& \qquad = \int_\Omega f(x) \bigl( - L g\bigr) (x) \mu(\dd x),  
\end{aligned} 
\ee
which holds for all bounded $f$ and $g$ when $K$ is finite. When $K$ is infinite, i.e., when $Z$ 
associated with $X$ is null-recurrent or transient, the integrals in the \eqref{eq:green} need not 
be absolutely convergent and the identity may fail. (A formal analogue is the identity $\int_{-\infty}^\infty 
f'(x) g'(x) \dd x = \int_{- \infty}^\infty f(x)[- g''(x)] \dd x$: the integration by parts works only if there are 
no boundary terms from infinity.) Using Cauchy\tire Schwarz, we see that the condition $\int_\Omega 
f(x)^2 \lambda(x) \mu(\dd x)<\infty$ is sufficient to ensure that the Green identity stays true. But in 
general  the minimizer of the Dirichlet principle need not satisfy this condition and therefore we need 
to proceed with caution.

Our strategy is as follows. First we show that the variational formula in \eqref{capdef} has a unique 
minimizer $h_{AB}$ (Lemma~\ref{lem:truncation} below). Next we check that the minimizer solves 
the Dirichlet problem (Lemma~\ref{lem:harmonic-function} below), and that the minimum is the limit 
of ``truncated'' minima (Lemma~\ref{lem:truncation} below). Finally we show that $\mathcal{E}(h_{AB}) 
= \int_A\bigl(-L h_{AB}\bigr) \dd \mu$ (Lemma~\ref{lem:charges} below). 

Recall the Dirichlet principle in \eqref{capdef}, with the definition of $\mathcal{V}_{AB}$ in \eqref{VABdef}. 
We assume that $\mathcal{V}_{AB}$ is non-empty or, equivalently, $\Cap(A,B) <\infty$. 

\begin{lemma} 
\label{lem:minimization} 
The Dirichlet form restricted to $\mathcal{V}_{AB}$ has a unique minimizer $h_{AB}$ i.e., there 
is a unique (up to $\mu$-null sets) $h_{AB} \in \mathcal{V}_{AB}$ such that $\Cap(A,B) 
= \mathcal{E}(h_{AB})$. 
\end{lemma}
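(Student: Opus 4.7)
The plan is to follow the classical convex-quadratic minimization strategy based on the parallelogram identity, adapted to the Dirichlet-form setting where $\mathcal{E}$ is a positive closed quadratic form on $\mathcal{D}(\mathcal{E})$. First I would fix any minimizing sequence $(h_n) \subset \mathcal{V}_{AB}$ with $\mathcal{E}(h_n)\to c:=\Cap(A,B)$. Convexity of $\mathcal{V}_{AB}$ gives $(h_n+h_m)/2\in\mathcal{V}_{AB}$ and hence $\mathcal{E}((h_n+h_m)/2)\geq c$, so the parallelogram identity for $\mathcal{E}$ yields
$$\mathcal{E}(h_n-h_m)=2\mathcal{E}(h_n)+2\mathcal{E}(h_m)-4\mathcal{E}((h_n+h_m)/2)\leq 2\mathcal{E}(h_n)+2\mathcal{E}(h_m)-4c\to 0,$$
which shows that $(h_n)$ is Cauchy in the seminorm $\mathcal{E}^{1/2}$.

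Next I would extract a candidate minimizer using the uniform bound $0\leq h_n\leq 1$ $\mu$-a.e. Weak-$\ast$ compactness of the unit ball of $L^\infty(\Omega,\mu)$ (along a countable family of $L^1(\mu)$ test functions, available by $\sigma$-finiteness of $\mu$) produces a weak-$\ast$ limit $h$ with $0\leq h\leq 1$ $\mu$-a.e., and testing against $\mathbf{1}_A\phi$ and $\mathbf{1}_B\phi$ shows that the pinned conditions $h|_A=1$ and $h|_B=0$ persist in the limit. Passing to convex combinations via Mazur's lemma preserves both $\mathcal{V}_{AB}$-membership and the $\mathcal{E}^{1/2}$-Cauchy property, and a diagonal extraction produces $\mu$-a.e.\ convergence to $h$. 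The lower semicontinuity built into the positive symmetric kernel representation~\eqref{eq:dirichlet-form} (Fatou applied to $[h(y)-h(x)]^2$ against $K$) then gives
$$\mathcal{E}(h)\leq\liminf_{n\to\infty}\mathcal{E}(h_n)=c,$$
so $h\in\mathcal{V}_{AB}$ is a minimizer.

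For uniqueness, suppose $h_1,h_2\in\mathcal{V}_{AB}$ both attain $c$. Then $(h_1+h_2)/2\in\mathcal{V}_{AB}$ also attains $c$, and the parallelogram identity forces $\mathcal{E}(h_1-h_2)=0$. From the kernel representation this means that $(h_1-h_2)(x)=(h_1-h_2)(y)$ for $K$-almost every pair $(x,y)$, i.e., $h_1-h_2$ is invariant under the jump kernel; combined with $(h_1-h_2)|_A=0$ and the irreducibility of $X$ (which transports the value on $A$ to $\mu$-almost all of $\Omega$), this forces $h_1=h_2$ $\mu$-a.e.

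The hard part will be making the extraction of a pointwise limit rigorous when $\mu(\Omega)=\infty$ (the null-recurrent or transient cases treated later in the paper), since the uniform $L^\infty$ bound does not by itself supply a $\mu$-a.e.\ convergent subsequence. I expect to handle this via the Mazur/diagonal procedure above carried out along an exhausting sequence of finite-$\mu$-measure subsets, invoking the closedness of $\mathcal{E}$ on $\mathcal{D}(\mathcal{E})$ to keep the $\mathcal{E}^{1/2}$-Cauchy convex combinations under control, and checking that the pinned boundary conditions are closed under the resulting mode of convergence.
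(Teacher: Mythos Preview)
Your approach is correct in spirit but takes a genuinely different route from the paper. The paper never works with the functions $h_n$ themselves in $L^\infty(\Omega,\mu)$; instead it lifts immediately to the \emph{difference functions} $(x,y)\mapsto h_n(x)-h_n(y)$ in $L^2(\Omega\times\Omega,K)$, applies Banach--Alaoglu there, and then reconstructs $h$ from the weak limit $H(x,y)$ by checking that $H$ is an additive cocycle and setting $h(x)=H(x,b)$ for a reference point $b\in B$. This sidesteps entirely the issue you flag as ``the hard part'': no $\mu$-a.e.\ convergence of the $h_n$ is ever needed, and the argument is insensitive to whether $\mu(\Omega)$ is finite. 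Your parallelogram argument is actually \emph{stronger} than what the paper uses---it shows the difference functions are Cauchy (hence strongly convergent) in $L^2(K)$, not merely weakly precompact---yet you do not exploit this; you instead detour through weak-$*$ compactness in $L^\infty(\mu)$ and a Mazur/diagonal scheme. That detour is delicate (Mazur's lemma concerns weak, not weak-$*$, convergence, so you must first pass to $L^2$ on finite-measure pieces as you note), and it can be avoided: once you have strong $L^2(K)$ convergence of the differences, you can either apply Fatou after extracting a $K$-a.e.\ convergent subsequence of the differences, or simply reconstruct $h$ from the $L^2(K)$ limit exactly as the paper does. Uniqueness is handled the same way in both arguments (strict convexity plus irreducibility).
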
 

\begin{proof}
The lemma is proved via standard convexity, lower semi-continuity and compactness arguments. 
Note that $\mathcal{V}_{AB}$ is a convex set. Since $\phi \mapsto \phi^2$ is strictly convex, we 
have that $\mathcal{E}(t f+ (1-t) g) \leq t\mathcal{E}(f) + (1-t) \mathcal{E}(g)$ for all $f,g\in 
\mathcal{V}_{AB}$ and $t \in (0,1)$, and there is equality if and only if  $f(x) - f(y) = g(x) - g(y)$ 
for $\mu$-a.a.\ $x,y\in \Omega$. The restriction $f|_A = 0 = g|_A$ and the irreducibility of $X$ 
therefore imply that $f = g$ almost everywhere. Hence $\mathcal{E}$ is strictly convex on 
$\mathcal{V}_{AB}$. 

Now let $(f_n)_{n\in\N}$ be a minimizing sequence of functions in $\mathcal{V}_{AB}$ for 
$\mathcal{E}$. Then $(x,y) \mapsto (f_n(x) - f_n(y))_{n\in\N}$  defines a sequence of functions in 
$L^2(\Omega\times \Omega,K)$ that is bounded in $L^2$-norm. The Banach\tire Alaoglu theorem 
ensures the existence of a  subsequence $(x,y) \mapsto (f_{n_j}(x) - f_{n_j}(y))_{j\in \N}$ that 
converges weakly in $L^2(\Omega \times \Omega, K)$, i.e., there is a  function $H \in L^2(\Omega
\times \Omega,K)$ such that
\begin{align} 
&\lim_{j\to \infty} \int_{\Omega\times\Omega} \bigl[ f_{n_j}(x) - f_{n_j}(y) \bigr] 
G(x,y)K(\dd x, \dd y) \\
&\qquad = \int_{\Omega\times\Omega} H(x,y) G(x,y) K(\dd x, \dd y)
\qquad \forall\,G\in L^2(\Omega\times \Omega,K).
\end{align}
The limit function $H$ inherits the following properties (all statements are up to $K$-null sets):
\begin{itemize} 
\item 
$H(x,y) = 1$ on $A\times B$. 
\item 
$H(x,y) = 0$ on $A\times A$ and $B\times B$.
\item 
$H(x,y) \geq 0$ on $A\times \Omega$ and $\Omega \times B$.
\item $H(x,y) + H(y,z) = H(x,z)$ almost everywhere.	
\end{itemize} 
Let $b\in B$ be an arbitrary reference point, and set $h(x)= H(x,b)$. Because of the above properties 
and $\mu(B)>0$, we can choose $b$ such that the following hold: $h=1$ on $A$, $h=0$ on $B$, 
$0 \leq h \leq 1$, and  $H(x,y) = h(x)-h(y)$ almost everywhere, i.e., $h\in \mathcal{V}_{AB}$. Since 
the $L^2$-norm is lower semi-continuous with respect to weak convergence (see Lieb and 
Loss~\cite{lieb-loss}[Theorem 2.11]), we have 
\be
\Cap(A,B) \leq \mathcal{E}(h) \leq \liminf_{j\to \infty} \mathcal{E}(f_{n_j}) 
= \inf_{f \in \mathcal{V}_{AB}} \mathcal{E}(f) = \Cap(A,B),
\ee
so $\mathcal{E}(h) = \Cap(A,B)$ and $h$ is a minimizer. Because of the strict convexity of $\mathcal{E}$,  
it follows that $h_{AB}$ is the unique minimizer. 
\end{proof} 

\begin{lemma} 
\label{lem:harmonic-function} 
{\rm (1)} The minimizer $h_{AB}$ solves the Dirichlet problem.\\
{\rm (2)} If $X$ is recurrent, then $h_{AB}(x) = \P_x( \tau_A < \tau_B)$.\\
{\rm (3)} If $X$ is transient, then $h_{AB}$ is different from the minimal solution $x\mapsto 
\P_x(\tau_A<\zeta,\, \tau_A<\tau_B)$ of the Dirichlet problem. 
\end{lemma}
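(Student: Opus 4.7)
My plan is to tackle the three parts in order; part~(3) will be the main obstacle.

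For part~(1) I would use the first-variation principle. A preparatory step removes the $[0,1]$ constraint: because $h \mapsto (h \vee 0) \wedge 1$ never increases $\mathcal{E}$ and preserves the boundary values, $h_{AB}$ also minimizes $\mathcal{E}$ over the unconstrained class $\tilde{\mathcal{V}}_{AB} = \{h \in \mathcal{D}(\mathcal{E}) \mid h|_A = 1,\ h|_B = 0\}$. For any test function $g \in \mathcal{D}(L_0)$ vanishing on $A \cup B$, the perturbation $h_{AB} + \epsilon g$ lies in $\tilde{\mathcal{V}}_{AB}$, and differentiating $\mathcal{E}(h_{AB} + \epsilon g)$ at $\epsilon = 0$ gives
\[
\int_{\Omega \times \Omega} \bigl[h_{AB}(y) - h_{AB}(x)\bigr]\bigl[g(y) - g(x)\bigr] K(\dd x, \dd y) = 0.
\]
Because $\lambda$ is bounded on the support of $g$, the Green identity \eqref{eq:green} is absolutely convergent and recasts this as $\int g\,(-L h_{AB})\,\dd \mu = 0$. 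Varying $g$ over a dense family supported in $\Omega \setminus (A \cup B)$ forces $-L h_{AB} = 0$ there; the boundary conditions on $A$ and $B$ are built into $\mathcal{V}_{AB}$.

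Part~(2) then follows immediately from Proposition~\ref{prop:dirichlet-problem}. Recurrence of $X$ gives $\zeta = \infty$ almost surely, and a Poincar\'e-recurrence argument (as used after Proposition~\ref{prop:non-explosion2}) yields $\P_x(\tau_{A \cup B} < \infty) = 1$ for $\mu$-a.e.\ $x$. That proposition then asserts uniqueness of the bounded solution of the Dirichlet problem, so the two bounded solutions $h_{AB}$ (from part~(1)) and $g_{AB}(x) = \P_x(\tau_A < \tau_B, \tau_A < \zeta) = \P_x(\tau_A < \tau_B)$ must coincide.

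For part~(3) my approach is the truncation machinery of Section~\ref{ss:truncation}. Pick $\Omega_n \nearrow \Omega$ with $K(\Omega_n \times \Omega_n) < \infty$ and $A \cup B \subset \Omega_n$, and let $X^n$ be the process with jumps out of $\Omega_n$ suppressed; $X^n$ is positive recurrent on $\Omega_n$, so by part~(2) applied to $X^n$ the truncated minimizer is the hitting probability $h_{AB}^n(x) = \P_x^n(\tau_A < \tau_B)$. The truncation approximation \eqref{eq:captruncapprox} (together with its companion pointwise convergence) gives $h_{AB}^n \to h_{AB}$. Since $X^n$ and $X$ agree until the first exit time $\tau_{\partial \Omega_n}$, one can split
\[
h_{AB}^n(x) = \P_x\bigl(\tau_A < \tau_B \wedge \tau_{\partial \Omega_n}\bigr) + \rho_n(x),
\]
where $\rho_n(x)$ is the mass contributed by paths that exit $\Omega_n$ before hitting $A \cup B$ and are then ``reflected'' into $A$ under $X^n$. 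The first term monotonically increases to $g_{AB}(x) = \P_x(\tau_A < \tau_B, \tau_A < \infty)$, while transience ($\P_x(\tau_{A \cup B} = \infty) > 0$ on a set of positive measure) should force $\liminf_n \rho_n > 0$ on that set, giving $h_{AB} > g_{AB}$ there. Establishing this strict positivity at the abstract Polish level---essentially, that the reflection splits escape paths between $A$ and $B$ in strictly positive proportion---is the main technical obstacle; a clean fallback is the explicit counterexample of simple random walk on $\Z^d$ for $d \geq 3$ with singletons $A, B$, where direct computation yields $g_{AB}(x) \to 0$ at infinity while $h_{AB}$ stays strictly above it off a compact set.
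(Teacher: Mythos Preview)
Your arguments for parts~(1) and~(2) are essentially the paper's: a first-variation computation for~(1) (your restriction to test functions $g \in \mathcal{D}(L_0)$ is in fact slightly more careful about integrability than the paper's version), and an appeal to the uniqueness statement of Proposition~\ref{prop:dirichlet-problem} for~(2).

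For part~(3), however, the paper's proof is far simpler than your truncation scheme, and your scheme has real gaps. The paper exploits a symmetry you overlooked: the map $h \mapsto 1-h$ is a bijection $\mathcal{V}_{AB} \to \mathcal{V}_{BA}$ preserving $\mathcal{E}$, so by uniqueness of minimizers $h_{AB} = 1 - h_{BA}$. On the other hand, a direct computation gives
\[
g_{AB}(x) + g_{BA}(x) = \P_x(\tau_{A\cup B} < \zeta),
\]
which is strictly less than $1$ on a set of positive measure when $X$ is transient. Hence $g_{AB} + g_{BA} \neq h_{AB} + h_{BA}$, so $g_{AB}$ cannot agree with $h_{AB}$ (at least not for both orderings of the pair). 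This is a two-line argument once you see the symmetry.

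Your truncation route, by contrast, relies on two unproven steps. First, Lemma~\ref{lem:truncation} only gives weak $L^2(\Omega\times\Omega,K)$ convergence of the increments $h_{AB}^n(x)-h_{AB}^n(y)$, not pointwise convergence of $h_{AB}^n$ itself; the ``companion pointwise convergence'' you invoke is not in the paper and would need its own argument. Second, and more seriously, the strict positivity $\liminf_n \rho_n > 0$ is exactly the content of part~(3) in disguise: you are asking that the reflected process, started from an escaping path, hits $A$ with uniformly positive probability, which is no easier than the original claim. Your fallback to $\Z^d$ with $d\geq 3$ would establish only an example, not the general statement. The symmetry argument sidesteps all of this.
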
 

\begin{proof}
(1) Suppose by contradiction that $h_{AB}$ does not solve the Dirichlet problem. Then we can find a 
function $f\in L^2(\Omega,\mu)$ such that $f =0$ on $A\cup B$ and $\int_\Omega (-L h_{AB}) f \dd \mu 
< 0$. Set $F(x,y)= f(x)$, $x,y\in\Omega$. Then $F \in L^2(\Omega\times \Omega,K)$ Cauchy\tire Schwarz 
ensures that the integral $\int_{\Omega \times \Omega} f(x) \bigl[ h_{AB}(x) - h_{AB}(y) \bigr] K(\dd x,\dd y)$ 
is absolutely convergent, and
\be
\begin{aligned} 
\mathcal{E}(h_{AB} + \eps f) 
&= \mathcal{E}(h_{AB} ) + \eps \int_{\Omega\times\Omega} f(x) 
\bigl[ h_{AB}(x) - h_{AB}(y) \bigr] K(\dd x, \dd y) + \eps^2 \mathcal{E}(f) \\
&=  \mathcal{E}(h_{AB}) + \eps \int_{\Omega} f(x) \bigl[-Lh_{AB} \bigr](x) \mu(\dd x)  
+ \eps^2 \mathcal{E}(f).
\end{aligned} 
\ee
Choosing $\eps$ small enough we obtain that $\mathcal{E}(h_{AB}+\eps f) < \mathcal{E}(h_{AB})$, 
which is a contradiction. 

\medskip\noindent
(2) If $\P_x(\sigma_{A\cup B} <\infty) =1$ for $\mu$-a.a.\ $x$, then Proposition~\ref{prop:dirichlet-problem} 
implies that the minimizer is equal to the unique solution of the Dirichlet problem $h_{AB}(x) = \P_x(\tau_A
<\tau_B)$.

\medskip\noindent	
(3) The bijection $\mathcal{V}_{AB} \mapsto \mathcal{V}_{BA}$, $h\mapsto 1-h$ leaves the Dirichlet 
form unchanged, because $\mathcal{E}(h) = \mathcal{E}(1-h)$. It follows that $\Cap(A,B) = \Cap(B,A)$ 
and, because the minimizer is unique, 
\be 
\label{eq:abba}
h_{AB} (x) = 1- h_{BA}(x) \quad \text{ for } \mu\text{-a.a. } x.  
\ee
On the other hand, the minimal solution $g_{AB}(x) = \P_x( \tau_A<\zeta,\ \tau_A<\tau_B)$ satisfies 
\be
\label{eq:gab} 
\begin{aligned}  
1 - g_{BA}(x) &= 1 - \P_x\bigl(\tau_B<\zeta,\,\tau_B<\tau_A\bigr) \\
&= \P_x\bigl(\tau_A<\zeta,\,\tau_A<\tau_B\bigr) 
+ \P_x\bigl(\tau_{A \cup B} \geq \zeta\bigr) \\
&= g_{AB}(x) + 1 - \P_x\bigl( \tau_{A\cup B}<\zeta\bigr).
\end{aligned} 
\ee
It follows that $1-g_{BA} \neq g_{AB}$ for transient $X$, and in view of \eqref{eq:abba} we have 
$g_{AB} \neq h_{AB}$.
\end{proof} 


\subsection{Truncation approximation}
\label{ss:truncation}

Next we show that the capacity can be obtained as the limit of certain truncated capacities. This 
will help us prove the Berman\tire Konsowa principle for transient and null recurrent $X$, and provide 
the missing step to show that $\Cap(A,B) = \int_A \bigl( - L h_{AB}) \dd \mu$.

Throughout the sequel we assume that $K((A \cup B) \times \Omega) < \infty$. Let $(\Omega_n)_{n\in \N}$ 
be an increasing sequence of measurable subsets of $\Omega$ with $\cup_{n\in\N}\Omega_n = \Omega$ 
such that $A \cup B \subset \Omega_n$ and $K(\Omega_n \times \Omega_n)<\infty$ for all $n\in\N$ 
\footnote{When $A \cup B \subset \Omega_n$ fails, extend $\Omega_n$ to $\Omega_n'=\Omega_n \cup (A \cup B)$
and note that $K(\Omega_n' \times \Omega_n') < \infty$ because $K(\Omega_n \times \Omega_n) < \infty$
and $K((A \cup B) \times \Omega) < \infty$.}.
Define the truncated kernel $k_n(x,\dd y) = \mathbf{1}_{\Omega_n\times \Omega_n}(x,y) k(x,\dd y)$, 
$x,y\in\Omega$, and the truncated measure $K_n$ as 
\be
K_n(C \times D) = K\bigl( (C \cap \Omega_n) \times (D \cap \Omega_n) \bigr) 
= \int_{C\times D} \mu(\dd x) k_n(x,\dd y), \qquad C,D\subset\Omega. 
\ee
Set
\be
\mathcal{E}_n(f) = \tfrac12 \int_{\Omega \times \Omega} \bigl[ f(x) - f(y) \bigr]^2 K_n(\dd x,\dd y) 
= \tfrac12 \int_{\Omega_n \times \Omega_n} \bigl[ f(x) - f(y) \bigr]^2 K(\dd x,\dd y). 
\ee
The truncated kernel $k_n$ is associated with a reversible jump process for which jumps outside 
$\Omega_n$ are suppressed. 

\begin{remark} 
The theory of Mosco convergence (see Mosco~\cite{mosco}) can be used to show that for 
non-explosive processes the truncated semi-groups and resolvents converge to those of
the full process (see Barlow, Bass, Chen and Kassmann~\cite{barlow-bass-chen-kassmann} 
for references). For our purpose, however, it will be enough to check that the truncated 
capacities converge. 
\end{remark} 

Let $\Cap_n(A,B)$ be the capacity of the truncated process and $h_{AB}^{n}$ the 
corresponding minimizer.

\begin{lemma}[Convergence of truncated capacities]
\label{lem:truncation} 
Suppose that $$K((A \cup B) \times \Omega) <\infty .$$ Then the following hold:\\ 
{\rm (1)} $n \mapsto \Cap_n(A,B)$ is non-decreasing and converges to $\Cap(A,B)$.\\ 
{\rm (2)} $(x,y) \mapsto h_{AB}^n(x) - h_{AB}^n(y)$ converges weakly in $L^2(\Omega\times\Omega,K)$
as $n\to\infty$, i.e.,
\begin{align} 
\label{hnhlim}
&\lim_{n\to \infty}  \int_{\Omega_n\times\Omega_n} \bigl[ h_{AB}^{n}(x) - h_{AB}^{n}(y) \bigr] 
G(x,y) K(\dd x,\dd y) \\ \nonumber
&\qquad =  \int_{\Omega \times\Omega} \bigl[ h_{AB}(x) - h_{AB}(y) \bigr] G(x,y) K(\dd x,\dd y)
\qquad \forall\,G\in L^2(\Omega\times\Omega,K). 		
\end{align}
\end{lemma}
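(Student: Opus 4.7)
The plan is to prove (1) via a monotonicity comparison plus a weak-compactness argument, then deduce (2) from the uniqueness of the Dirichlet minimizer (Lemma~\ref{lem:minimization}). For monotonicity, note that for $m \leq n$ the minimizer $h_{AB}^n$ is admissible for the $m$-truncated Dirichlet problem: it satisfies $h|_A=1$, $h|_B=0$, $0\leq h\leq 1$ (since $A\cup B\subset \Omega_m$) and $\mathcal{E}_m(h_{AB}^n)\leq \mathcal{E}_n(h_{AB}^n)<\infty$. Hence
\[
\Cap_m(A,B) \leq \mathcal{E}_m(h_{AB}^n) \leq \mathcal{E}_n(h_{AB}^n) = \Cap_n(A,B).
\]
The same comparison applied to $h_{AB}$ gives $\Cap_n(A,B) \leq \mathcal{E}_n(h_{AB}) \leq \mathcal{E}(h_{AB}) = \Cap(A,B)$. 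So $(\Cap_n(A,B))_n$ is non-decreasing with $\lim_n \Cap_n(A,B) \leq \Cap(A,B)$.

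For the reverse inequality, define $\tilde H_n(x,y) := \mathbf{1}_{\Omega_n\times \Omega_n}(x,y)[h_{AB}^n(x)-h_{AB}^n(y)]$, so that $\|\tilde H_n\|_{L^2(K)}^2 = 2\Cap_n(A,B) \leq 2\Cap(A,B)$. Banach-Alaoglu in the reflexive Hilbert space $L^2(\Omega\times\Omega,K)$ extracts a subsequence $\tilde H_{n_j} \rightharpoonup H$. I would identify $H$ as a difference-function $h(x)-h(y)$ by following the reference-point construction in the proof of Lemma~\ref{lem:minimization}: for each fixed $N$, once $n_j\geq N$, the restriction $\tilde H_{n_j}|_{\Omega_N\times \Omega_N}$ already lies in the norm-closed (hence weakly closed, by Mazur) linear subspace of $L^2(\Omega_N\times \Omega_N,K)$ consisting of difference-functions $(x,y)\mapsto f(x)-f(y)$, so its weak limit has this form $h^N(x)-h^N(y)$. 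Pinning down the constant by requiring $h^N|_B=0$ (using $\mu(B)>0$) and passing $N\to\infty$ gives a measurable $h\colon\Omega\to[0,1]$ with $H(x,y)=h(x)-h(y)$ $K$-a.e., $h|_A=1$, $h|_B=0$, and $\mathcal{E}(h)=\tfrac12\|H\|_{L^2(K)}^2<\infty$; in other words, $h\in\mathcal{V}_{AB}$. The pointwise bounds $0 \leq h \leq 1$ and the boundary values survive the weak limit because the corresponding convex sets are weakly closed.

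Weak lower semi-continuity of the $L^2$-norm then gives
\[
2\mathcal{E}(h) = \|H\|_{L^2(K)}^2 \leq \liminf_j \|\tilde H_{n_j}\|_{L^2(K)}^2 = 2\lim_n \Cap_n(A,B),
\]
and combining with $\Cap(A,B)\leq \mathcal{E}(h)$ from the Dirichlet principle yields (1). Since the chain of inequalities is saturated, $h$ achieves the minimum; by uniqueness (Lemma~\ref{lem:minimization}), $h=h_{AB}$. Hence every weakly convergent subsequence of $(\tilde H_n)$ has the same limit $h_{AB}(x)-h_{AB}(y)$, and a standard subsubsequence argument upgrades this to weak convergence of the full sequence, proving (2). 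The main obstacle lies in the identification step: verifying norm-closedness of the subspace of difference-functions in $L^2(\Omega_N\times \Omega_N,K)$ (for instance, by recovering the primitive via integration against a probability density absolutely continuous with respect to $\mu|_{\Omega_N}$ and using Cauchy-Schwarz to deduce $L^2$-convergence of primitives from $L^2$-convergence of differences), together with checking that the pointwise constraints and boundary values transfer through the weak limit. The monotonicity and lower-semicontinuity steps are, by contrast, routine.
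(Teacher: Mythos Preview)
Your proposal is correct and follows essentially the same route as the paper: monotonicity of $\mathcal{E}_n$ gives $\Cap_n(A,B)\uparrow$ bounded by $\Cap(A,B)$; Banach--Alaoglu extracts a weak subsequential limit of the truncated difference functions $H_n(x,y)=\mathbf{1}_{\Omega_n\times\Omega_n}(x,y)[h_{AB}^n(x)-h_{AB}^n(y)]$; the limit is identified with $h(x)-h(y)$ for some $h\in\mathcal{V}_{AB}$; weak lower semicontinuity of the $L^2$-norm plus the Dirichlet principle forces $h=h_{AB}$ and equality throughout; and the subsubsequence argument upgrades to full weak convergence. The only place you depart from the paper is in the identification step, where you invoke norm-closedness (hence weak closedness via Mazur) of the subspace of difference functions on each $\Omega_N\times\Omega_N$, while the paper simply appeals to the reference-point construction in Lemma~\ref{lem:minimization}---but this is a matter of emphasis rather than a different argument, and you rightly flag it as the place where the real work hides.
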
 

\begin{proof}
(1) We have $\mathcal{E}_n(f) \leq \mathcal{E}_{n+1}(f) \leq \mathcal{E}(f)$ for all $f\in \mathcal{V}_{AB}$. 
Hence 
\be 
\label{eq:cap-monotonicity} 
\Cap_n(A,B) \leq \Cap_{n+1}(A,B) \leq \Cap(A,B).
\ee
Note that $\mathcal{E}_n(h_{AB}^n) = \Cap_n(A,B) \leq \Cap(A,B)<\infty$. It follows that the sequence 
of functions $(x,y) \mapsto (H_n(x,y))_{n\in \N}$ given by 
\be
H_n(x,y) = \bigl[ h_{AB}^n(x) - h_{AB}^n(y) \bigr] \mathbf{1}_{\Omega_n}(x) \mathbf{1}_{\Omega_n}(y),
\qquad x,y \in \Omega,  
\ee 	
is bounded in $L^2(\Omega\times\Omega,K)$. As in the proof of Lemma~\ref{lem:minimization}, the 
Banach\tire Alaoglu theorem and the lower semi-continuity of the $L^2$-norm with respect to weak 
convergence show that, upon passing to a subsequence, we may assume that $H_n(x,y)$ converges
weakly in $L^2(\Omega \times \Omega,K)$ to $h(x) - h(y)$ for some $h\in\mathcal{V}_{AB}$, and  
\begin{align}
\Cap(A,B) &\leq \tfrac12 \int_{\Omega\times \Omega} \bigl[ h(x) - h(y) \bigr]^2 K(\dd x,\dd y) \nonumber \\ 
&\leq \liminf_{n\to \infty} \tfrac12 \int_{\Omega\times \Omega} H_n(x,y) ^2 K(\dd x,\dd y)
= \lim_{n\to \infty} \Cap_n(A,B). \label{eq:Hbound}
\end{align} 
Together with the inequality in \eqref{eq:cap-monotonicity}, this implies that $\lim_{n\to\infty} \Cap_n(A,B) 
= \Cap(A,B)$ and $\mathcal{E}(h) = \Cap(A,B)$,  and hence $h = h_{AB}$.\\
(2) We see from \eqref{eq:Hbound} that any accumulation point of $H_n(x,y)$ equals $h_{AB}(x)
- h_{AB}(y)$, which implies \eqref{hnhlim}.   
\end{proof} 

\begin{lemma} 
\label{lem:charges} 
The equilibrium charges defined by
\be
Q_A  = \int_{A} \bigl( - L h_{AB}\bigr)(x) \mu(\dd x), \quad  
Q_B = \int_{B} \bigl( - L h_{AB}\bigr)(x) \mu(\dd x),
\ee
satisfy $Q_A = - Q_B = \Cap(A,B)$. 	 
\end{lemma}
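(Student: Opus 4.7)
The plan is to establish the identity first for the truncated processes introduced in Section~\ref{ss:truncation}, where the Green identity \eqref{eq:green} is available without convergence issues because $K_n$ is finite, and then to pass to the limit via Lemma~\ref{lem:truncation}.

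At the truncated level, since $K_n(\Omega \times \Omega) < \infty$, the Green identity applied to $f = g = h_{AB}^n$ (which is bounded and hence automatically lies in the relevant $L^2$ spaces for $K_n$) gives
\be
\Cap_n(A,B) = \mathcal{E}_n(h_{AB}^n) = \int_\Omega h_{AB}^n\,(-L_n h_{AB}^n)\,\dd\mu.
\ee
By Lemma~\ref{lem:harmonic-function}(1) applied to the truncated problem, $-L_n h_{AB}^n = 0$ $\mu$-a.e.\ on $\Omega \setminus (A \cup B)$. Combined with $h_{AB}^n|_A = 1$ and $h_{AB}^n|_B = 0$, this collapses the right-hand side to $\int_A (-L_n h_{AB}^n)\dd\mu$, so $Q_A^n := \int_A (-L_n h_{AB}^n)\dd\mu = \Cap_n(A,B)$. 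For the total charge, integrating $-L_n h_{AB}^n$ over all of $\Omega$ and using the symmetry of $K_n$ gives $\int_\Omega (-L_n h_{AB}^n)\dd\mu = \int_{\Omega\times\Omega}[h_{AB}^n(x)-h_{AB}^n(y)]K_n(\dd x,\dd y) = 0$, so $Q_B^n = -\Cap_n(A,B)$.

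To pass to the limit, I would exploit that the standing hypothesis $K((A\cup B)\times\Omega)<\infty$ places the indicators $G_A(x,y)=\mathbf{1}_A(x)$ and $G_B(x,y)=\mathbf{1}_B(x)$ in $L^2(\Omega\times\Omega,K)$. Testing the weak-$L^2$ convergence provided by Lemma~\ref{lem:truncation}(2) against $G_A$ yields
\be
Q_A^n = \int_{\Omega\times\Omega} H_n(x,y)\mathbf{1}_A(x)K(\dd x,\dd y) \longrightarrow \int_{\Omega\times\Omega}\bigl[h_{AB}(x)-h_{AB}(y)\bigr]\mathbf{1}_A(x)K(\dd x,\dd y) = Q_A,
\ee
where the last equality is obtained by Fubini (valid since $K(A\times\Omega)<\infty$) and the definition of $L$. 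Combined with Lemma~\ref{lem:truncation}(1), this gives $Q_A = \Cap(A,B)$. Testing against $G_B$ in exactly the same way gives $Q_B = -\Cap(A,B)$.

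The main subtlety is that for transient or null-recurrent $X$ the integrals appearing in the Green identity need not be absolutely convergent for $h_{AB}$ itself, so one cannot directly write $\mathcal{E}(h_{AB}) = \int_A (-Lh_{AB})\dd\mu$. The truncation detour circumvents this obstacle, and the role of the hypothesis $K((A\cup B)\times\Omega)<\infty$ is precisely to guarantee the $L^2$-integrability of $\mathbf{1}_A$ and $\mathbf{1}_B$ needed to upgrade weak $L^2$-convergence to convergence of the equilibrium charges themselves.
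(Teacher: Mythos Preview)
Your proposal is correct and follows essentially the same route as the paper: establish the identity at the truncated level (where $K_n$ is finite and the Green identity is unproblematic), then pass to the limit via the weak $L^2$-convergence of Lemma~\ref{lem:truncation}(2), using $K((A\cup B)\times\Omega)<\infty$ to ensure $\mathbf{1}_A,\mathbf{1}_B\in L^2(\Omega\times\Omega,K)$. The only cosmetic difference is that the paper first tests against $G(x,y)=h_{AB}(x)-h_{AB}(y)$ to identify $\mathcal{E}(h_{AB})$ with the limit of $\int_A(-L^n h_{AB}^n)\dd\mu$ before testing against $\mathbf{1}_A$, whereas you go straight to $Q_A^n=\Cap_n(A,B)$ and invoke $\Cap_n\to\Cap$ from part~(1); for $Q_B$ the paper uses the symmetry $h_{AB}=1-h_{BA}$ rather than your total-charge-zero argument.
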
 

\begin{proof} 
Write
\be
\begin{aligned} 
\mathcal{E}(h_{AB}) &= \lim_{n\to \infty} \tfrac12 \int_{\Omega_n \times \Omega_n} 
\bigl[ h_{AB}^n(x) - h_{AB}^n(y) \bigr] \bigl[ h_{AB}(x) - h_{AB}(y) \bigr]  K(\dd x,\dd y) \\
&= \lim_{n\to \infty} \int_{\Omega_n \times \Omega_n} h_{AB}(x) \bigl[ h_{AB}^n(x) - h_{AB}^n(y) \bigr] 
K(\dd x,\dd y)  \\
& = \lim_{n\to \infty} \int_{\Omega_n} h_{AB}(x) \bigl( - L^n h_{AB}^n\bigr) (x) \mu(\dd x) \\
&= \lim_{n\to\infty} \int_A \bigl( - L^n h_{AB}^n \bigr)(x) \mu(\dd x)
= \int_A (-Lh_{AB})(x) \mu(\dd x), 
\end{aligned} 
\ee
where $L^n$ is the generator of the truncated process on $\Omega_n$. The first equality uses
\eqref{hnhlim} with $G(x,y) = h_{AB}(x)-h_{AB}(y)$, the fourth equality use that $h_{AB}^n$ and 
$h_{AB}$ solve the Dirichlet problem in \eqref{Diriproblem}, the fifth equality uses \eqref{hnhlim}
with $G(x,y) = 1_A(x)$. Thus we have shown that $\Cap(A,B) = \mathcal{E}(h_{AB})= Q_A$. For 
$Q_B$ we apply \eqref{eq:abba} and note that
\be 
Q_B = \int_{B} \bigl( - L h_{AB} \bigr) \dd\mu 
= \int_{B} \bigl( L h_{BA} \bigr) \dd \mu = -\Cap(B,A) = - \Cap(A,B)
\ee
(see the proof of Lemma~\ref{lem:harmonic-function}). 
\end{proof}


\subsection{Thomson principle} 
\label{ss:TPproved}

Let $h_{AB}$ be the unique minimizer in the Dirichlet principle and $\Phi_{AB}$ the associated flow. 

\begin{prop}[Thomson principle] 
For $A,B\subset\Omega$ disjoint, 
\be 
\frac{1}{\Cap(A,B)}  = \min_{\Phi \in \mathcal{U}_{AB}^K} 
\left[ \int_{\Omega \times \Omega} \frac{\dd\Phi}{\dd K}(x,y) \right]^2 K(\dd x,\dd y) 
\ee
and the harmonic flow $\Phi_{AB}$ in \eqref{eq:harmonic-flow} is the unique minimizer. 
\end{prop}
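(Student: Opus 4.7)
The plan is to exploit the orientedness of flows and pair them with potentials via Cauchy--Schwarz. For $\Phi \in \mathcal{U}_{AB}^K$ with density $\phi = \dd \Phi/\dd K$, introduce the antisymmetrization $\tilde\phi(x,y) = \phi(x,y) - \phi(y,x)$. Because the oriented set $\chi$ of Definition~\ref{def:flow}(4) contains no reverse edges, $\phi(x,y)\phi(y,x) = 0$ for $K$-a.e.\ $(x,y)$, so the symmetry of $K$ yields the key identity
\[
\int_{\Omega\times\Omega} \phi(x,y)^2\, K(\dd x,\dd y) = \tfrac12 \int_{\Omega\times\Omega} \tilde\phi(x,y)^2\, K(\dd x,\dd y).
\]

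First I would verify that $\Phi_{AB}$ attains the value $1/\Cap(A,B)$. Since $\dd \Phi_{AB}/\dd K = [h_{AB}(x)-h_{AB}(y)]_+/\Cap(A,B)$, its antisymmetrization is $[h_{AB}(x)-h_{AB}(y)]/\Cap(A,B)$, and the identity above together with \eqref{eq:dirichlet-form} and Lemma~\ref{lem:charges} gives $\int (\dd \Phi_{AB}/\dd K)^2\, \dd K = \mathcal{E}(h_{AB})/\Cap(A,B)^2 = 1/\Cap(A,B)$.

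For the lower bound, fix $\Phi \in \mathcal{U}_{AB}^K$ and an arbitrary $h \in \mathcal{V}_{AB}$. The decisive step is a pairing identity: let $\nu_L(\cdot) = \Phi(\cdot \times \Omega)$ and $\nu_R(\cdot) = \Phi(\Omega\times \cdot)$. Then the antisymmetry of $(x,y) \mapsto h(x)-h(y)$ and orientedness give
\[
\tfrac12 \int_{\Omega\times\Omega} [h(x)-h(y)]\tilde\phi(x,y)\, K(\dd x,\dd y) = \int_\Omega h\,\dd\nu_L - \int_\Omega h\,\dd\nu_R = 1,
\]
where the last equality uses Definition~\ref{def:flow}(1)--(3) and the boundary values $h|_A=1$, $h|_B=0$: the signed measure $\nu_L - \nu_R$ has mass $+1$ on $A$, mass $-1$ on $B$, and vanishes on $(A \cup B)^c$. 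Applying Cauchy--Schwarz in $L^2(\Omega\times\Omega,K)$ and invoking the identity from the first paragraph,
\[
1 \leq \Bigl( \tfrac12 \int \tilde\phi^2\, \dd K \Bigr)^{1/2} \Bigl( \tfrac12 \int [h(x)-h(y)]^2\, K(\dd x,\dd y) \Bigr)^{1/2} = \Bigl( \int \phi^2\, \dd K \Bigr)^{1/2} \mathcal{E}(h)^{1/2}.
\]
Choosing $h = h_{AB}$ converts the right factor into $\Cap(A,B)^{1/2}$ and yields $1/\Cap(A,B) \leq \int \phi^2\, \dd K$.

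For uniqueness, equality in Cauchy--Schwarz forces $\tilde\phi(x,y) = c[h_{AB}(x)-h_{AB}(y)]$ $K$-a.e.; the pairing identity pins down $c = 1/\Cap(A,B)$, and orientedness recovers $\phi$ as $\tilde\phi_+$, so $\Phi = \Phi_{AB}$. The main technical subtlety is ensuring absolute convergence of the pairing integral: in the positive recurrent regime ($K(\Omega\times\Omega)<\infty$) this is automatic from Cauchy--Schwarz applied to $\phi \in L^2(K)$ and $h(x)-h(y) \in L^2(K)$ (the latter because $\mathcal{E}(h_{AB})<\infty$), while for the extension to infinite total conductance one would invoke the truncation approximation of Section~\ref{ss:truncation} exactly as in the proof of Theorem~\ref{thm:BK12ext}.
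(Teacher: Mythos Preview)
Your argument is correct and follows essentially the same route as the paper: antisymmetrize the density, pair with a potential to get the value $1$, and apply Cauchy--Schwarz; the paper carries this out with the truncated potentials $h_n = h_{AB}^n\mathbf{1}_{\Omega_n}$ and passes to the limit, whereas you work directly with $h_{AB}$. One small clarification: your closing remark about needing truncation for infinite $K$ is misplaced---the splitting $\int [h(x)-h(y)]\,\Phi(\dd x,\dd y) = \int h\,\dd\nu_L - \int h\,\dd\nu_R$ is justified not by $K(\Omega\times\Omega)<\infty$ but simply by $|h|\leq 1$ together with $\Phi(\Omega\times\Omega)<\infty$, which is already built into the definition of $\mathcal{U}_{AB}^K$, so your direct argument in fact goes through uniformly.
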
 

\begin{proof}
For finite $K$, we have already checked in Lemma~\ref{lem:hf} that $\Phi_{AB}$ is a unit flow. For 
infinite $K$ the proof is similar. The conditions $\Phi_{AB} (A\times\Omega) = \Phi_{AB}(\Omega \times B) 
= 1$ follow from Lemma~\ref{lem:charges}. 
	
Let $h_n=h_{AB}^{n}$ be the truncated harmonic function introduced in Section~\ref{app1:DT}. 
By Lemma~\ref{lem:truncation}, upon passing to a subsequence, we may assume that $h_n(x) 
- h_n(y) \to h_{AB}(x) - h_{AB}(y)$ weakly in $L^2(\Omega\times \Omega, K)$ as $n\to\infty$. 
Let $\Phi$ be a unit flow, and let
\be
\phi(x,y) = \frac{\dd \Phi}{\dd K}(x,y) - \frac{\dd \Phi}{\dd K}(y,x)
\ee
be the antisymmetrized Radon-Nikod{\'y}m derivative of $\Phi$ with respect to $K$. Since the flow 
is directed, we have
\be
\phi(x,y) = \begin{cases} 	
\frac{\dd \Phi}{\dd K}(x,y),
&\quad (x,y) \in \chi,\\
-\frac{\dd \Phi}{\dd K}(y,x),
&\quad (y,x) \in \chi,\\
0, 
&\quad \text{otherwise}, 
\end{cases}
\ee
for some measurable $\chi \subset \Omega\times \Omega$ chosen such that the cases are mutually 
exclusive (see item (4) in Def.~\ref{def:flow}). Note that 
\be
\mathcal{E}(\Phi) = \int_{\Omega \times \Omega }
\left[ \frac{\dd \Phi}{\dd K}(x,y) \right]^2 K(\dd x,\dd y) 
= \frac{1}{2}\int_{\Omega \times \Omega} \phi(x,y)^2 K(\dd x,\dd y).  
\ee
By a slight abuse of notation we use the same letter $\mathcal{E}$ for the quadratic forms on flows 
and on functions. Assume that $\Phi$ has finite energy, i.e., $\phi \in L^2(\Omega\times \Omega, K)$. Let $h_{AB}^n(x)$ be as in the proof of Lemma~\ref{lem:truncation} and $h_n(x):= h_{AB}^n(x) \mathbf{1}_{\Omega_n}(x)$. We have $h_n(x) - h_n(y) \in L^2(\Omega \times \Omega,K)$ and $h_n \in L^2(\Omega,\nu)$ 
with $\nu(\dd x) = \lambda(x) \mu(\dd x)$ the marginal of $K$. By exploiting the symmetry of $K$ and 
the anti-symmetry of $\phi$, we get
\be 
\label{eq:thomsonproof1}
\begin{aligned} 
\tfrac12  \int_{\Omega \times \Omega} \bigl[ h_n(x) - h_n(y) \bigr] \phi(x,y) K(\dd x,\dd y) 
&=  \int_{\Omega \times \Omega}  h_n(x)  \phi(x,y)  K(\dd x,\dd y)\\
& = \int_{\Omega \times \Omega} h_n(x) M_\Phi(\dd x),  
\end{aligned} 
\ee
where $M_\Phi(\dd x)$ is the anti-symmetrized marginal, i.e., $M_\Phi(C) = \Phi(C\times \Omega)
- \Phi(\Omega\times C)$, $C\subset\Omega$ measurable. Since $\Phi$ is a unit flow, we have 
$M_\Phi(C) =0$ for all $C\subset\Omega \backslash (A\cup B)$ and $M_\Phi(A) = 1$. It follows that 
\be 
\int_{\Omega \times \Omega} h_n(x) M_\Phi(\dd x)  = M_\Phi(A) = 1
\ee
and, taking the limit $n\to \infty$ in \eqref{eq:thomsonproof1}, we obtain
\be 
\tfrac12 \int_{\Omega \times \Omega} \bigl[ h_{AB}(x) - h_{AB}(y) \bigr] \phi(x,y) K(\dd x,\dd y) = 1.
\ee	
The  Cauchy\tire Schwarz inequality yields
\be 
[ \Cap(A,B) ]^{1/2} [ \mathcal{E}(\Phi) ]^{1/2} \geq 1
\ee 
with equality if and only if $\phi(x,y) = c (h_{AB}(x)- h_{AB}(y))$ for some $c>0$ and $K$-a.a.\ $(x,y)$. 
The unit flow condition fixes the constant as $c = 1/\Cap(A,B)$, and so
\be
\label{eq:hflow}
\frac{\dd \Phi}{\dd K}(x,y) = \phi(x,y)_+ - \phi(x,y)_ - 
= \frac{1}{\Cap(A,B)} \bigl[ h_{AB}(x) - h_{AB}(y) \bigr]_+.
\ee
Thus, we have found that $\mathcal{E}(\Phi ) \geq 1/\Cap(A,B)$ for all unit flows, with equality if and only 
if $\Phi= \Phi_{AB}$.
\end{proof}


\section{Appendix~B. Physical interpretation of variational principles}
\label{app2}

The Dirichlet principle and the Thomson principle have well-known physical interpretations 
in terms of electric networks (see e.g.\ Doyle and Snell~\cite{doyle-snell}, Peres~\cite{peres},
Gaudilli\`ere \cite{gaudilliere}). In Section~\ref{app2:DT} we recall these interpretations, while 
in Section~\ref{app2:BK} we give an interpretation of the Berman\tire Konsowa principle that 
is based on three ingredients: (1) resistances in series add up; (2) conductances in parallel 
add up; (3) each network has an equivalent network that is richer but simpler, consisting 
of chains of resistors in parallel. The latter interpretation, which was suggested in 
Bovier~\cite{bovier}, is worked out in detail. 


\subsection{Dirichlet and Thomson}
\label{app2:DT}

For the sake of exposition we assume that $\Omega$ is \emph{finite}, and write $K_{xy}$ for 
$K(\{(x,y)\})$. Let $G$ be the undirected graph with vertex set $\Omega$ and edge 
set $E'=\{\{x,y\} \in \Omega \times \Omega \mid K_{xy}>0\}$. For later purpose, we write 
$E$ for the corresponding set of directed edges, i.e., $E = \{(x,y)\in \Omega \mid K_{xy}>0\}$. 
The network consists of a set of resistors: each edge $\{x,y\}$ of $G$ has resistance $R_{xy} 
= 1/K_{xy}$. Recall \emph{Ohm's law}: voltages $(V_{x})_{x \in \Omega}$ induce currents 
$(i_{xy})_{(x,y) \in E}$. The current flows from high to low voltage, and the intensity of the 
current is $K_{xy} |V_x - V_y|$. We adopt the convention that $i_{xy} =0$ when $V_x<V_y$,
so that $i_{xy} = K_{xy} (V_x - V_y)_+$.\footnote{Another convention is to take $i_{xy}<0$ 
when $V_x<V_y$, so that the sign of $i$ carries the information of the direction of the current.} 
Furthermore, by the \emph{Joule effect}, the current through the resistor network dissipates 
energy at a rate 
\be
\label{eq:dissipation}
\mathcal{P} = \sum_{(x,y) \in E} i_{xy} \bigl( V_x - V_y\bigr)_+ 
= \tfrac{1}{2} \sum_{x,y\in \Omega} K_{xy} \bigl(V_x - V_y\bigr)^2
=  \sum_{(x,y)\in E} R_{xy} i_{xy}^2, 
\ee
which is the \emph{electric power} (measured in Watt). The factor $\tfrac12$ can be removed 
when we replace $(V_x - V_y)$ by $(V_x- V_y)_+$. In \eqref{eq:dissipation} we recognize the 
Dirichlet form $\mathcal{E}(h)$ in \eqref{eq:dirichlet-form} applied to the function $h(x)= V_x$. 

Now, let $A$ and $B$ be two disjoint subsets of $\Omega$. We extend the network by adding 
two points to $\Omega$: the \emph{source} $s$ and the \emph{sink} $s'$. The edge set is 
enriched by connecting all points from $A$ to $s$ and all points from $B$ to $s'$. The new 
edges are assigned resistance zero (infinite conductivity). As a consequence, the vertices in 
$A$  and $B$ have the same voltage as the source and sink (``wiring''). Moreover, the flow 
through the new edges does not dissipate energy, so that the total energy dissipation is still 
given by \eqref{eq:dissipation}. 

It is a standard problem from electrical engineering to determine the \emph{effective resistance} 
$R_\text{eff}$ (or effective conductance $C_\text{eff} = 1/R_\text{eff}$) between the source and 
the sink, defined as follows. Fix $V_s$ and $V_{s'}$, the voltages of the source and the sink,
assume $V_s>V_{s'}$, and set $U = V_s - V_{s'}$. Let 
\be
\bigl( (V_x)_{x\in \Omega},(i_{xy})_{\{x,y\} \in E}\bigr)
\ee 
solve the following set of equations:
\begin{itemize} 
\item 
Wiring to source and sink: 
$V_a = V_s$ for all $a\in A$, $V_b = V_{s'}$ for all $b\in B$. 
\item 
Ohm's law: $i_{xy} = K_{xy} (V_x - V_y)_+$ for all $\{x,y\} \in E$. 
\item 
Kirchhoff's law:
\be \label{eq:kirchhoff}
i_{sz} + \sum_{ {x\in \Omega\colon} \atop {\{x,z\} \in E} } i_{xz} 
= i_{zs'} +   \sum_{ {y\in \Omega\colon} \atop {\{z,y\} \in E} } 
i_{zy} \qquad \forall\,z \in \Omega. 
\ee
\end{itemize}
For most networks there will be a unique solution, which satisfies $V_{s'} \leq 
V_x \leq V_{s}$ for all $x \in \Omega$. Ohm's law implies that nothing flows into the 
source (or into $A$) and nothing flows out of the sink (or out of $B$). The total flow out 
of the source is
\be 
I = \sum_{a \in A} i_{sa} = \sum_{a \in A} \sum_{y \in \Omega} i_{ay}.
\ee
On finite networks, Kirchhoff's law implies that the current out of the source equals the 
current into the sink, i.e.,
\be 
I = \sum_{b\in B} i_{b s'} = \sum_{b\in B} \sum_{x \in \Omega} i_{xb}. 
\ee
The effective resistance is 
\be
\label{eq:ohm-eff}
R_\text{eff} = \frac{U}{I} = \frac{V_{s} - V_{s'}}{I},
\ee
which depends on the sets $A$ and $B$, and on the conductances $K_{xy}$, but not on 
the voltages $V_s$, $V_{s'}$. Therefore we can evaluate the effective resistance for 
$V_s =1$, $V_{s'} =0$, in which case the voltage distribution $(V_x)_{x\in \Omega}$ is 
equal to the harmonic function $h_{AB}(x)$ in \eqref{eq:habdef}, and the ``capacity'' in
\eqref{eq:cap-def1} is equal to 
\be
\Cap(A,B) =  - \sum_{a\in A} \sum_{y\in \Omega}  K_{a y} (V_y - V_a) 
= \sum_{a \in A} \sum_{y\in \Omega} i_{a y} = I. 
\ee
Comparing this expression with \eqref{eq:ohm-eff} and remembering our choice $U = V_{s} 
- V_{s'} = 1-0 =1$, we see that 
\be
\Cap(A,B) = \frac{I}{U} = \frac{1}{R_\text{eff}} = C_\text{eff}, 
\ee
i.e., our mathematical capacity is equal to the effective conductance of the network. 

\begin{remark}
The use of the word ``capacity'', though legitimate from a mathematical point of view, is not 
quite appropriate in the physical context of electric networks. Indeed, the current-voltage 
relation of a capacitor is $I(t) = C \dd U(t)/\dd t $ with $C>0$ the capacity: this is 
clearly inconsistent with our relation  $I = C_\text{eff} U$. The word ``capacity'' becomes 
legitimate when the Dirichlet form is interpreted as an electrostatic energy rather than a 
dissipation rate -- but in this context the ``conductances'' must be replaced by ``dielectric 
permittivities''. The probabilist vocabulary is a hybrid of two distinct physical pictures, and 
the physicist reader must be aware of this potential source of confusion. 
\end{remark}

The electric power in \eqref{eq:dissipation}, evaluated for the voltage and current distribution 
solving the set of equations described above, equals
\be
\mathcal{P} =U I = C_\text{eff} U^2 = R_\mathrm{eff} I^2.   
\ee
The Dirichlet principle says that minimization of the electric power over all voltage distributions 
with net voltage $U= V_s- V_{s'}$ yields the effective conductance, while the Thompson principle 
says that minimization of the power over all current distributions with net current $I=1$ yields the 
effective resistance. 


\subsection{Berman\tire Konsowa}
\label{app2:BK}

The interpretation of the Berman\tire Konsowa principle is more involved. We start with two 
simple examples. 

The first example consists of a finite chain of resistors \emph{in series},  e.g., $\Omega = 
\{0,1,\ldots,N\}$, $A=\{0\}$, $B=\{N\}$, $K_{xy} >0$ for $|x-y|=1$ and $K_{xy} =0$ otherwise. 
Resistances in series add up, hence $R_\mathrm{eff} = \sum_{j=1}^{N} R_{j-1,j}$ 
or, equivalently, 
\be
C_\mathrm{eff} = \left( \sum_{j=1}^{N} \frac{1}{K_{j-1,j}}\right)^{-1}.
\ee
We recognize the right-hand side of \eqref{eq:Cap1} from Theorem~\ref{thm:BK1} evaluated 
for the deterministic path $\gamma=(0,1,\ldots,N)$. 

The second example consists of resistors \emph{in parallel}. Suppose that $\Omega = A \cup B$, 
so that we may think of the network as a set of resistors $R_{ab}$ in parallel between the source 
and the sink. Conductances in parallel add up, hence 
\be
C_\text{eff} = \sum_{a\in A} \sum_{b\in B} K_{ab}.  
\ee
On the other hand, let $(p_{ab})_{a\in A,b\in B}$ be such that $\sum_{a\in A,b \in B} p_{ab} =1$ 
and $p_{ab}>0$ for all $a\in A$, $b\in B$. The weights $p_{ab}$ determine a probability measure 
on ``paths'' of length $1$ from $A$ to $B$. The right-hand side of \eqref{eq:Cap1} equals 
\be
\sum_{a\in A,\,b\in B} p_{ab} \Bigl( \frac{p_{ab}}{K_{ab}} \Bigr)^{-1} 
=  \sum_{a\in A,\,b\in B}  K_{ab}. 
\ee

Hence the Berman\tire Konsowa principle reproduces the additivity of resistances (in series) or 
conductances (in parallel). Now consider an arbitrary finite network and fix a probability measure 
$\P$ on $\Gamma_{AB}$, the set of finite paths from $A$ to $B$. Let $\Phi_\P$ be the associated 
flow. We construct a new network as follows: 

\begin{figure}
\begin{tikzpicture} [>=triangle 45]
	\draw [rounded corners] (0,1)--(1,2)--(4,2)--(5,1);
	\draw (0,1)--(3,1)--(5,1)--(7,1);
	\draw [rounded corners] (0,1)--(3,1)--(4,0)--(6,0)--(7,1); 
	\draw[fill=red] (0,1)  circle [radius =.1];
	\draw[fill=red] (3,1)  circle [radius =.1];
	\draw[fill=red] (5,1)  circle [radius =.1];
	\draw[fill=red] (7,1)  circle [radius =.1];
	\draw [fill=lightgray] (2,2.2) rectangle (3,1.8);
	\draw [fill=lightgray] (1,1.2) rectangle (2,0.8);
	\draw [fill=lightgray] (3.5,1.2) rectangle (4.5, 0.8);
	\draw [fill=lightgray] (5.5,1.2) rectangle (6.5,0.8);
	\draw [fill=lightgray] (4.5,0.2) rectangle (5.5,-0.2);
	\node [below left] at (0,.8) {$a$};
	\node [below] at (3,.8) {$x$}; 
	\node[below] at (5,.8) {$y$}; 
	\node[below right] at (7,.8) {$b$};
	\node[above right] at (2,2.2) {$R_{ay}$};
	\node[below right] at (1,.8) {$R_{ax}$};
	\node[above right] at (3.5,1.2) {$R_{xy}$};
	\node[above right] at (5.5,1.2) {$R_{yb}$};
	\node[below right] at (4.5,-0.2) {$R_{xb}$};
	\node[right, text width = 5cm] at (8,1) 
	{(a) The original network has four vertices $a,x,y,b$. Each edge $e=(ij)$ 
	is associated with a resistance $R_{ij}$.}; 

	\begin{scope}[yshift=-4cm]
	\draw [->,  rounded corners] (0,1)--(1,2)--(4,2)--(4.9,1.1);
	\draw[->, rounded corners] (0,1)--(2.9,1);
	\draw[->, rounded corners] (3,1)-- (4.9,1);
	\draw [->,  rounded corners] (5,1)--(6.9,1);
	\draw[->,  rounded corners] (3,1)--(4,0)--(6,0)--(6.9,.9); 
	\draw[fill=red] (0,1)  circle [radius =.1];
	\draw[fill=red] (3,1)  circle [radius =.1];
	\draw[fill=red] (5,1)  circle [radius =.1];
	\draw[fill=red] (7,1)  circle [radius =.1];
	\node [below left] at (0,.8) {$a$};
	\node [below] at (3,.8) {$x$}; 
	\node[below] at (5,.8) {$y$}; 
	\node[below right] at (7,.8) {$b$};
	\node[above right] at (2,2) {$\Phi(a,y)$};
	\node[below right] at (1,1) {$\Phi(a,x)$};
	\node[above right] at (3.5,1) {$\Phi(x,y)$};
	\node[above right] at (5.5,1) {$\Phi(y,b)$};
	\node[below right] at (4.5,0) {$\Phi(x,b)$}; 
	\node[right, text width = 5cm] at (8,1) 
	{(b) An $ab$-unit flow $\Phi(i,j)$ determines a stochastic matrix $\ell(i,j)$ 
	and a probability measure $\P$ on paths from $a$ to $b$. E.g., $\ell(a,x)
	= \Phi(a,x)$, $\ell(x,b)= \Phi(x,b) / M(x)$ with $M(x) = \Phi(a,x)$, and 
	$\P(a\to x \to b) = \ell(a,x) \ell(x,b)$.} ;
	\end{scope} 

	\begin{scope}[yshift=-8cm]
	\draw [rounded corners] (0,1)--(1,2)--(6.5,2)--(7,1);
	\draw (0,1)--(3,1)--(5,1)--(7,1);
	\draw [rounded corners] (0,1)--(1,0)--(6.5,0)--(7,1); 
	\draw[fill=red] (5,2)  circle [radius =.1];
	\draw[fill=red] (0,1)  circle [radius =.1];
	\draw[fill=red] (3,1)  circle [radius =.1];
	\draw[fill=red] (5,1)  circle [radius =.1];
	\draw[fill=red] (7,1)  circle [radius =.1];
	\draw[fill=red] (3,0)  circle [radius =.1];
	\draw [fill=lightgray] (2,2.2) rectangle (3,1.8);
	\draw [fill=darkgray] (1,1.2) rectangle (2,0.8);
	\draw [fill=lightgray] (3.5,1.2) rectangle (4.5, 0.8);
	\draw [fill=darkgray] (5.5,1.2) rectangle (6.5,0.8);
	\draw [fill=lightgray] (4.5,0.2) rectangle (5.5,-0.2);
	\draw [fill=darkgray] (5.5,2.2) rectangle (6.5,1.8);
	\draw [fill=darkgray] (1,0.2) rectangle (2,-0.2);
	\node [below left] at (0,.8) {$a$};
	\node [below] at (3,.8) {$x_1$}; 
	\node[below] at (5,0.9) {$y_1$}; 
	\node[below right] at (7,.8) {$b$};
	\node[below] at (3,-0.2) {$x_2$}; 
	\node[above] at (5,2) {$y_2$};
	\draw [dashed, rounded corners ](0.7,1.4) rectangle (3.3,-0.6);
	\draw[dashed,rounded corners](4.7,2.5) rectangle (6.7, 0.5); 
	\node[right, text width = 5cm] at (8,1) 
	{(c) The new network consists of parallel chains of resistances; 
	each chain comes from a path $\gamma$ allowed under $\P$.  
	An edge or vertex visited by more than one path is split accordingly 
	(dashed boxes).};
	\end{scope}	
\end{tikzpicture} 
\caption{\label{fig} Electric network constructed out of a unit flow: 
(a) the original network, (b) a unit flow, (c) the associated new network.} 
\end{figure} 


\begin{enumerate} 
\item 
With each path $\gamma \in \Gamma$, associate a chain of resistors in series with reduced resistances 
$R_{xy}^\gamma =  R_{xy}\Phi_\P(x,y)/\P(\gamma)$, $(x,y) \in \gamma$. The effective 
conductance of this chain is 
\be 
C_\text{eff}^\P(\gamma) 
=\left( \sum_{(x,y) \in \gamma} R_{xy}^\gamma \right)^{-1} 
= \P(\gamma)  \left( \sum_{(x,y) \in \gamma} \frac{\Phi_\P(x,y)}{K_{xy}} \right)^{-1}.
\ee
\item 
Put the chains in parallel, so that they do not overlap. As a consequence, the equivalent network 
in general has more vertices than the original network (Figure~\ref{fig}). The total 
conductance is 
\be
C_\text{eff}^\P = \sum_{\gamma\in \Gamma} C_\text{eff}^\P(\gamma). 
\ee	
\end{enumerate}



\end{document}